\documentclass[a4paper,10pt,reqno]{amsart}
\usepackage{amsfonts,amsmath,amssymb,bbm,amsthm,amsbsy}
\usepackage{graphicx}
\usepackage{geometry}
\usepackage{float}
\usepackage{verbatim}
\usepackage{setspace}
\usepackage{siunitx}
\usepackage{paralist}
\usepackage{enumitem}
\usepackage{xcolor}
\usepackage{lipsum}
\usepackage[english]{babel}
\usepackage{booktabs}
\usepackage{array}
\usepackage{subfig}
\usepackage{caption}
\usepackage{mathtools}
\usepackage{stmaryrd}
\usepackage{stackengine,scalerel}
\usepackage{hyperref}
\usepackage[utf8]{inputenc}

\newenvironment{frcseries}{\fontfamily{frc}\selectfont}{}

\makeatletter
\newtheorem*{rep@theorem}{\rep@title}
\newcommand{\newreptheorem}[2]{
\newenvironment{rep#1}[1]{
 \def\rep@title{#2 \ref{##1}}
 \begin{rep@theorem}}
 {\end{rep@theorem}}}
\makeatother

\newtheorem{thm}{Theorem}[section]
\newtheorem{lemma}[thm]{Lemma}
\newtheorem{prop}[thm]{Proposition}
\newtheorem{corr}[thm]{Corollary}

\newtheorem*{thm*}{Theorem}
\newtheorem*{lemma*}{Lemma}
\newtheorem*{prop*}{Proposition}
\newtheorem*{corr*}{Corollary}
\newtheorem*{claim*}{Claim}

\newtheorem{innercustomthm}{Theorem}
\newenvironment{customthm}[1]
  {\renewcommand\theinnercustomthm{#1}\innercustomthm}
  {\endinnercustomthm}

\theoremstyle{remark}
\newtheorem{rmk}[thm]{Remark}

\newtheorem*{rmk*}{Remark}
\newtheorem*{conj*}{Conjecture}
\newtheorem*{quest*}{Question}

\theoremstyle{definition}
\newtheorem{defn}[thm]{Definition}
\newtheorem{exmp}[thm]{Example}

\newtheorem*{defn*}{Definition}
\newtheorem*{exmp*}{Example}

\newreptheorem{theorem}{Theorem}
\newreptheorem{corollary}{Corollary}
\newreptheorem{proposition}{Proposition}

\usepackage{fancyhdr}
\usepackage{tikz}
\usepackage{tikz-cd}

\usepackage{IEEEtrantools}

\newenvironment{equ*}[1]{\begin{IEEEeqnarray*}{#1}}{\end{IEEEeqnarray*}}

\newcommand{\Z}{\mathbb{Z}}

\DeclareFontFamily{U}{mathx}{}
\DeclareFontShape{U}{mathx}{m}{n}{<-> mathx10}{}
\DeclareSymbolFont{mathx}{U}{mathx}{m}{n}
\DeclareMathAccent{\widecheck}{0}{mathx}{"71}

\newcommand{\inj}{\hookrightarrow}
\newcommand{\sur}{\twoheadrightarrow}

\DeclareMathOperator{\Hom}{Hom}

\DeclareMathOperator{\Ker}{Ker}

\DeclareMathOperator{\colim}{colim}

\newcommand{\Set}{\mathbf{Set}}
\newcommand{\Grp}{\mathbf{Grp}}
\newcommand{\Grpd}{\mathbf{Grpd}}

\newcommand{\Pro}{\mathbf{Pro}}

\newcommand{\Cat}{\mathbf{Cat}}
\newcommand{\Graph}{\mathbf{Graph}}

\newcommand{\G}{\mathcal{G}}
\newcommand{\Hc}{\mathcal{H}}

\title{Categorical Bass--Serre Theory for Tree Quotients}
\author{Jiacheng Tang}
\thanks{Email: jiacheng.tang@postgrad.manchester.ac.uk; Address: Alan Turing Building, University of Manchester, Manchester, M13 9PY, United Kingdom.}

\calclayout
\begin{document}
\maketitle

\begin{abstract}
We develop a categorical framework for Bass--Serre theory, which recovers parts of classical and profinite Bass--Serre theories for tree quotients. More precisely, we show that in what we call a \emph{Bass--Serre category}, a group object acting on a graph satisfying some assumptions can be recovered as the fundamental group of the associated graph of groups acting on the standard graph. Examples of Bass--Serre categories include the category of sets, the category of profinite spaces, and any Grothendieck topos.
\end{abstract}

\section{Introduction}
\label{sec1}

Bass--Serre theory, the theory of groups acting on trees, is a major part of geometric group theory with numerous applications to group theory (see \cite{trees}). Since its initial development, the theory has been extended to profinite groups, groupoids, and inverse monoids (see \cite{ribesgraph}, \cite{bsgrpd}, and \cite{bsinv} respectively). As such, one natural question to ask is whether there is a general categorical framework that captures the above theories.

The ``structure theorem" of classical Bass--Serre theory (\cite[Theorem I.13]{trees}) asserts that if a group $G$ acts on a tree $X$, then $G$ (resp.\ $X$) can be recovered as the fundamental group (resp.\ the standard graph) of the associated graph of groups. There are, in a vague sense, two parts to the theory: the ``geometric" or ``homotopy-theoretic" part of showing that the standard graph is the universal covering of $X$, and the ``algebraic" part of reconstructing $G$ as the fundamental group.

\subsection*{Aim and Main Results} 

The current paper aims to build a categorical framework for the ``algebraic" part of Bass--Serre theory when the quotient $G\backslash X$ is a tree. We define what it means for a category $\mathbf{C}$ to be \emph{Bass--Serre} (Definition \ref{bscat}), which roughly asks for the following two properties. Firstly, whenever a group object $G$ acts on an object $X\in\mathbf{C}$, we want the quotient $G\backslash X$ to be well behaved. We call this property \emph{action Barr-exactness} (Definition \ref{defweakexact}), which is a weakening of the usual notion of Barr-exactness\footnote{This weakening is very much necessary for us, as the category of profinite spaces is action Barr-exact but not Barr-exact (see Example \ref{eglocallyweakly}).}. This in particular implies that $\mathbf{C}$ satisfies an internal analogue of the Orbit-Stabiliser Theorem (see Definition \ref{defost} and Proposition \ref{propexactost}).

Secondly, to even define the fundamental group of a graph of groups (Definition \ref{fundgrp}), we need the category $\Grp(\mathbf{C})$ of group objects in $\mathbf{C}$ to have $\mathbf{C}$-internal coproducts (see Section \ref{sec3-2}). When $\mathbf{C}=\Set$ is the category of sets, this is just the coproduct of groups. When $\mathbf{C}=\Pro$ is the category of profinite spaces, this is the profinite coproduct, or ``free profinite product", of profinite groups indexed by a profinite space.

Suppose $\mathbf{C}$ is a Bass--Serre category, which includes $\Set$, $\Pro$, and every Grothendieck topos (Propositions \ref{bssetpro} and \ref{bssheaf}). If a group object $G\in\Grp(\mathbf{C})$ acts on an internal graph $X\in\Graph(\mathbf{C})$ satisfying some assumptions\footnote{As we shall point out in Remark \ref{rmktransport}, for $\mathbf{C}=\Set$, we are implicitly assuming that the quotient $G\backslash X$ is a tree.}, then we can form the associated graph of groups (Section \ref{sec3-1}), its fundamental group $\pi$ (Definition \ref{fundgrp}), and the standard graph $T$ (Definition \ref{stdgraph}), along with canonical maps $\varphi\colon\pi\to G$ and $\Phi\colon T\to X$. Our main theorem, which is a categorical version of \cite[Theorem I.10]{trees} (often called the ``structure theorem"), is as follows.

\begin{customthm}{\ref{mainthmre}}
With the assumptions above, the following are equivalent.
\begin{enumerate}[label=(\roman*)]
\item\label{mainthmintro1} The map $\Phi\colon T\to X$ has a section in $\Graph(\mathbf{C})$.
\item The map $\Phi\colon T\to X$ is an isomorphism in $\Graph(\mathbf{C})$.
\item The map $\varphi\colon\pi\to G$ is an isomorphism in $\mathbf{C}$ (and hence in $\Grp(\mathbf{C})$).
\end{enumerate}
\end{customthm}

As we will see in Example \ref{mainthmeg}, for $\mathbf{C}=\Set$ (resp.\ $\mathbf{C}=\Pro$) where there is a good covering theory, condition \ref{mainthmintro1} may be viewed as an algebraic way of saying that $X$ is a tree (resp.\ simply connected profinite graph).

In Theorem \ref{mainconv}, we will also establish a partial converse to the structure theorem, namely the fact that any ``monic" graph of groups can be recovered as the graph of groups associated to the action of its fundamental group $\pi$ on the standard graph $T$.

\subsection*{Outline of Paper}

We will assume knowledge of basic category theory (refer to \cite{maccat}), including regular and Barr-exact categories (refer to \cite{barr}). In Section \ref{sec3-2} and the appendices, we will respectively assume knowledge of internal and extensive categories (refer to \cite{handbook} and \cite{extensive}), and of 2-categories (refer to \cite{lack2}), but these sections can be skipped without affecting the core idea of the paper. In various examples, we will also use properties of pro-completions (refer to \cite{catandsh}). Bass--Serre theory for ordinary groups (refer to \cite{trees} or \cite{dicksdun}) and profinite groups (refer to \cite{ribesgraph}) is needed to appreciate (though not needed to understand) the main results in the paper.

In Section \ref{sec2}, we will study some basic concepts related to internal group actions, including stabilisers and quotients (Section \ref{sec2-1}) and an internal analogue of the Orbit-Stabiliser Theorem (Definition \ref{defost}). We will give, in Section \ref{sec2-3}, an appropriate weakening of Barr-exactness that includes $\Pro$ and implies the internal Orbit-Stabiliser Theorem. The focus of Section \ref{sec3} is then to define graphs of groups and their fundamental groups using internal colimits. We will also prove the ``surjectivity" part of the structure theorem in Theorem \ref{thmregepic}, namely that if $X$ is connected, then the canonical map $\varphi\colon\pi\to G$ is regular epic.

In Section \ref{sec4}, we define the standard graph of a graph of groups and prove the structure theorem (Theorem \ref{mainthm}), as well as its converse (Proposition \ref{propgraphconv}). The proof of the structure theorem requires a number of technical, seemingly disconnected assumptions, which are satisfied in Bass--Serre categories, whose definition (Definition \ref{bscat}) is much cleaner and easier to verify. Indeed, in Section \ref{sec5}, we will restate our main results for Bass--Serre categories (Theorems \ref{mainthmre} and \ref{mainconv}) and look at some examples of such categories, which include all Grothendieck toposes.

One immediate question is whether one could extend our theory and eliminate the implicit assumption that $G\backslash X$ is a tree (see Remark \ref{rmktransport}). In Appendix \ref{appendixA}, we give some ideas on how this might be done and explain why it is likely to be more technical. In Appendix \ref{appendixB}, we will explain the relationship between our framework and the one developed in \cite{gogpullbacks}.

Remark: We will often write ``=" to mean canonically isomorphic (or canonically equivalent for categories), but the meaning should be clear from context.

Convention: Although Serre's \emph{Trees} (\cite{trees}) is the classic reference for Bass--Serre theory, we will follow the treatment of \cite{dicksdun} instead and work with directed graphs. All our categories are locally small and all (co)limits are small. By 2-categories or 2-(co)limits, which only appear in the appendices, we always mean strict ones, unless otherwise specified.

\subsection*{Acknowledgements}
The author would like to thank Giacomo Tendas for helpful discussions and for reading a draft of this paper.

This work was supported by the EPSRC Doctoral Training Partnership [grant number \linebreak EP/W524347/1].

\subsection*{Statement of AI Use}
ChatGPT 5.6 Sol has been used for idea generation, literature search, and proofreading. In particular, both appendices are inspired by observations made by ChatGPT 5.6 Sol (and earlier versions). The entire manuscript is written and checked by the author.

\section{Internal Group Actions}
\label{sec2}

In this section, we will establish some basic results about internal group actions. After defining stabilisers and quotients in a general category in Section \ref{sec2-1}, we will look at an internal analogue of the Orbit-Stabiliser Theorem (Section \ref{sec2-2}), which we abbreviate to $\mathrm{OST}$. In Section \ref{sec2-3}, we will study the notion of action Barr-exactness, which is sufficient to imply $\mathrm{OST}$ and will form a key component in the definition of Bass--Serre categories in Section \ref{sec5}.

\subsection{Stabilisers and Quotients}
\label{sec2-1}

This subsection aims to define and study basic notions regarding internal groups and graphs, such as stabilisers, free actions, quotients, transitive actions and connected graphs. Let $\mathbf{C}$ be a category with finite limits.

We shall set up some notations first. Let $\Grp(\mathbf{C})$ denote the category of group objects in $\mathbf{C}$ and let $\Graph(\mathbf{C})=[(\boldsymbol{\cdot}\rightrightarrows\boldsymbol{\cdot}),\mathbf{C}]$ denote the category of graphs in $\mathbf{C}$. We will usually write $$m_G\colon G\times G\to G,\,\,\,\, i_G\colon G\to G,\,\,\,\, e_G\colon*\to G$$ for the structure maps of a group object $G\in\Grp(\mathbf{C})$, and write $$X=(EX\overset{d_0}{\underset{d_1}{\rightrightarrows}}VX)$$ for a graph $X$. As for usual (directed) graphs, we call $EX$, $VX$, $d_0$ and $d_1$ the \emph{edge object}, the \emph{vertex object}, the \emph{source map} and the \emph{target map} of $X$, respectively.

Given a group object $G\in\Grp(\mathbf{C})$, let $\mathbf{C}(G)$ denote the category of (left) $G$-objects in $\mathbf{C}$. Here, a $G$-object is just an object $X\in\mathbf{C}$, together with a map $a\colon G\times X\to X$, called the \emph{action map}, satisfying commutative diagrams corresponding to the usual axioms of group actions. We will often just say that \emph{$G$ acts on $X$}. Similarly, let $\Graph(G)=[(\boldsymbol{\cdot}\rightrightarrows\boldsymbol{\cdot}),\mathbf{C}(G)]$ denote the category of $G$-graphs in $\mathbf{C}$. By commutation of limits, we see that $\mathbf{C}(G)$ and hence $\Graph(G)$ have finite limits which are computed as in $\mathbf{C}$.

\begin{defn}\label{stabdef}
Let $G\in\Grp(\mathbf{C})$ and $X$ be a $G$-object (resp.\ $G$-graph). The \emph{stabiliser} of this action, denoted by $G_X\in\mathbf{C}_{/X}$ (resp.\ $\Graph(\mathbf{C})_{/X}$), is the following equaliser in $\mathbf{C}$ (resp.\ $\Graph(\mathbf{C})$):
$$G_{X}\xrightarrow{\mathrm{eq}}G\times X\overset{a}{\underset{\mathrm{pr}_X}{\rightrightarrows}} X.$$ Note that for $\Graph(\mathbf{C})$, by $G$ we really mean $(G\rightrightarrows G)$ (the diagonal functor with value $G$), $X=(EX\rightrightarrows VX)$ and the product is taken in the functor category $\Graph(\mathbf{C})=[(\boldsymbol{\cdot}\rightrightarrows\boldsymbol{\cdot}),\mathbf{C}]$. In other words, we have the following diagram in $\mathbf{C}$:
\[
\begin{tikzcd}
E(G_X) \arrow[r, "\mathrm{eq}"] \arrow[d,shift left]\arrow[d,shift right] & G\times EX \arrow[r,shift left]\arrow[r,shift right] \arrow[d,shift left]\arrow[d,shift right] & EX\arrow[d,shift left]\arrow[d,shift right] \\
V(G_X) \arrow[r, "\mathrm{eq}"']          & G\times VX \arrow[r,shift left]\arrow[r,shift right]           & VX          
\end{tikzcd}
\]
We will be using similar abuse of notation in the following for convenience.
\end{defn}

\begin{exmp}
Suppose $\mathbf{C}=\Set$ and $X$ is a $G$-set/graph. Then, the stabiliser object $G_X$ defined above is precisely the set $\{(g,x)\colon x\in X, g\in G_x\}$, where $G_x$ is the usual stabiliser group of $x\in X$. If $X$ is a $G$-graph, then the source/target map of $G_X$ $$E(G_X)=\{(g,e)\colon e\in EX, g\in G_e\}\to V(G_X)=\{(g,v)\colon v\in VX, g\in G_v\}$$ sends $(g,e)$ to $(g,d_i(e))$. When restricted to the fibre of $E(G_X)$ above $e\in EX$, this is precisely the inclusion of stabilisers $G_e\inj G_{d_i(e)}$. The stabiliser \emph{group} $G_x$ is, of course, a group rather than just a set.
\end{exmp}

\begin{lemma}\label{grpoblemma}
Let $G\in\Grp(\mathbf{C})$ and $X$ be a $G$-object (resp.\ $G$-graph). The stabiliser $G_X\in\mathbf{C}_{/X}$ (resp.\ $\Graph(\mathbf{C})_{/X}$) is naturally a group object in $\mathbf{C}_{/X}$ (resp.\ $\Graph(\mathbf{C})_{/X}$).
\begin{proof}
We treat both cases simultaneously. We need to define multiplication $G_X\times G_X\to G_X$, identity $X\to G_X$ and inversion $G_X\to G_X$ in the relevant slice category over $X$. For example, multiplication is simply given by the following induced map:
\[\begin{tikzcd}
G_X\times_X G_X \arrow[r] \arrow[d, dashed] & G\times G\times X \arrow[d] &   \\
G_X \arrow[r, "\mathrm{eq}"']               & G\times X \arrow[r,shift left]\arrow[r,shift right]         & X
\end{tikzcd}\]
Here, the right vertical map is induced by the multiplication map $m_G\colon G\times G\to G$ of $G$. Similarly, the map $X\to G_X$ is the following induced map:
\[
\begin{tikzcd}
X \arrow[r,equal] \arrow[d, dashed] & *\times X \arrow[d]                                    &   \\
G_X \arrow[r, "\mathrm{eq}"'] & G\times X \arrow[r, shift left] \arrow[r, shift right] & X
\end{tikzcd}
\]
We leave the remainder of the proof as an exercise.
\end{proof}
\end{lemma}

\begin{defn}
Let $G\in\Grp(\mathbf{C})$ and $X\in\mathbf{C}(G)$ (or $X\in\Graph(G)$). We say that the action is \emph{free} if $G_X=X$, or more precisely, if the canonical map $G_X\to X$ is an isomorphism.
\end{defn}

\begin{lemma}\label{selffree}
The canonical action of $G$ on itself is free.
\begin{proof}
This can be checked directly, which is left as an exercise in the axioms of group objects, but instead we will use the Yoneda embedding $y\colon\mathbf{C}\inj[\mathbf{C}^\mathrm{op},\Set]_{\mathrm{small}}$, where $[\mathbf{C}^\mathrm{op},\Set]_{\mathrm{small}}$ is the category of small presheaves. Since $y$ preserves finite limits, it induces a functor on group objects $\Grp(\mathbf{C})\inj[\mathbf{C}^\mathrm{op},\Grp]_{\mathrm{small}}$ which is also denoted by $y$. Here, ``small" refers to presheaves of groups whose underlying presheaf of sets is small. We then only need to show that the composition $yG_G\to yG\times yG\rightrightarrows yG$ is an isomorphism pointwise, but this is obvious.
\end{proof}
\end{lemma}

For the rest of this subsection, suppose that $\mathbf{C}$ also has all coequalisers in addition to having finite limits, and that coequalisers are preserved by the product functor $C\times(-)$ for every $C\in\mathbf{C}$. Many of the results below require only specific coequalisers to exist and be preserved (such as coequalisers coming from group actions), but it is convenient for us to assume that all coequalisers exist from the start.

\begin{defn}\label{transconn}
Let $G\in\Grp(\mathbf{C})$ and $X$ be a $G$-object (resp.\ $G$-graph). The \emph{quotient object} (resp.\ \emph{quotient graph}) of this action, denoted by $G\backslash X\in\mathbf{C}$ (resp.\ $\Graph(\mathbf{C})$), is the following coequaliser in $\mathbf{C}$ (resp.\ $\Graph(\mathbf{C})$): $$G\times X\overset{a}{\underset{\mathrm{pr}_X}{\rightrightarrows}} X\xrightarrow{\mathrm{coeq}}G\backslash X.$$ An action is said to be \emph{transitive} if $G\backslash X=*$ is the terminal object.

Let $X\in\Graph(\mathbf{C})$. Its \emph{object of connected components}, denoted by $\mathrm{Comp}(X)\in\mathbf{C}$, is the following coequaliser in $\mathbf{C}$: $$EX\overset{d_0}{\underset{d_1}{\rightrightarrows}} VX\xrightarrow{\mathrm{coeq}}\mathrm{Comp}(X).$$ Note that $\mathrm{Comp}\colon\Graph(\mathbf{C})\to\mathbf{C}$ is left adjoint to the constant functor $\mathbf{C}\to\Graph(\mathbf{C})$. A graph $X$ is said to be \emph{connected} if $\mathrm{Comp}(X)=*$ is the terminal object.
\end{defn}

\begin{exmp}
\begin{enumerate}[label=(\roman*)]
\item Consider $\mathbf{C}=\Set$. It is easy to see that the notions of freeness, transitivity and connectedness from above all correspond to the usual notions.

\item Consider $\mathbf{C}=\Pro$, the category of profinite spaces. All the notions above are again just the usual ones, but it's slightly less obvious. Let's prove that a profinite graph\footnote{Our notion of profinite graphs is less general than the one in \cite{ribesgraph}, since we require the edge space $EX$ to be a \emph{profinite} subspace of $X$.} $X$ is connected in the sense of Definition \ref{transconn} if and only if it is connected in the sense of \cite{ribesgraph}, i.e.\ it is an inverse limit of finite connected graphs. This follows from the fact that $\Pro$ has exact inverse limits and the inclusion of finite sets into $\Pro$ preserves coequalisers (cf.\ Example \ref{egost}\ref{egost2}), i.e.\ we have in particular $$\mathrm{Comp}(\varprojlim X_i)=\varprojlim\mathrm{Comp}(X_i).$$
\end{enumerate}
\end{exmp}

Let $X\in\Graph(\mathbf{C})$ and consider the diagonal functor $$\Delta\colon\mathbf{C}\to\Graph(\mathbf{C})_{/X},\,\,\,\, C\mapsto (C\times EX\rightrightarrows C\times VX).$$ This is just the composition of the constant functor $\mathbf{C}\to\Graph(\mathbf{C})$ with the product functor $(-)\times X\colon\Graph(\mathbf{C})\to\Graph(\mathbf{C})_{/X}$, each of which has a left adjoint. Thus, the diagonal functor $\Delta$ also has a left adjoint. We record this below along with some other basic results for easy reference later.

\begin{lemma}\label{diaglemma}
The diagonal functor $\Delta$ given above has a left adjoint which sends $Y\to X$ to $\mathrm{Comp}(Y)$.
\end{lemma}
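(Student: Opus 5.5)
The diagonal functor $\Delta$ factors as the constant functor $c\colon\mathbf{C}\to\Graph(\mathbf{C})$, $C\mapsto(C\rightrightarrows C)$, followed by $(-)\times X\colon\Graph(\mathbf{C})\to\Graph(\mathbf{C})_{/X}$. I will compute the left adjoint of each factor and compose them, since adjoints compose in reverse order.

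First, the product functor $(-)\times X\colon\Graph(\mathbf{C})\to\Graph(\mathbf{C})_{/X}$ is right adjoint to the forgetful functor $\Sigma_X\colon\Graph(\mathbf{C})_{/X}\to\Graph(\mathbf{C})$, which sends $(Y\to X)$ to $Y$. This is the standard adjunction $\Sigma_X\dashv X^*$ between a slice and the ambient category; it holds because $\Graph(\mathbf{C})$ has finite products, computed pointwise. Concretely, a map $Y\to Z\times X$ over $X$ is the same thing as a map $Y\to Z$, since the second component is forced to be the structure map $Y\to X$.

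Second, $\mathrm{Comp}\dashv c$, as noted after Definition \ref{transconn}. I will verify this directly. A graph map $Y\to c(C)$ consists of maps $f_E\colon EY\to C$ and $f_V\colon VY\to C$ satisfying $f_V d_0=f_E=f_V d_1$. It is therefore determined by a single map $f_V\colon VY\to C$ with $f_Vd_0=f_Vd_1$. By the universal property of the coequaliser $\mathrm{Comp}(Y)$, such maps correspond bijectively to maps $\mathrm{Comp}(Y)\to C$. This bijection is natural in $Y$ and $C$. Note that it uses only the existence of the coequaliser $EY\rightrightarrows VY$, which is assumed in this subsection.

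Combining the two steps gives natural bijections
\[
\Graph(\mathbf{C})_{/X}(Y\to X,\Delta C)\cong\Graph(\mathbf{C})(Y,c(C))\cong\mathbf{C}(\mathrm{Comp}(Y),C).
\]
So the left adjoint is $\mathrm{Comp}\circ\Sigma_X$, which sends $(Y\to X)$ to $\mathrm{Comp}(Y)$, as claimed. I do not expect a genuine obstacle here. The only point needing care is checking that the composite bijection is natural in $(Y\to X)$, and this follows because each of the two bijections is natural.
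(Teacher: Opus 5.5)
Your proposal is correct and takes the same approach as the paper, which also factors $\Delta$ as the constant functor $\mathbf{C}\to\Graph(\mathbf{C})$ followed by $(-)\times X$ and composes their left adjoints, $\mathrm{Comp}$ and the forgetful functor from the slice. You write out the two adjunction bijections explicitly, while the paper only asserts that each factor has a left adjoint.
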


\begin{lemma}\label{deltacoeq}
The diagonal functor $\Delta$ preserves coequalisers.
\begin{proof}
The constant functor $\mathbf{C}\to\Graph(\mathbf{C})$ certainly preserves coequalisers. The product functor $(-)\times X\colon\Graph(\mathbf{C})\to\Graph(\mathbf{C})_{/X}$ does as well by our assumptions on $\mathbf{C}$.
\end{proof}
\end{lemma}

\begin{lemma}\label{deltaff}
The diagonal functor $\Delta$ is fully faithful if and only if $X$ is connected.
\begin{proof}
It is equivalent and easy to verify that the counit $\mathrm{Comp}(\Delta C)\to C$ is an isomorphism if and only if $X$ is connected, using our assumptions on $\mathbf{C}$ again.
\end{proof}
\end{lemma}

\begin{lemma}\label{qconnected}
Let $G\in\Grp(\mathbf{C})$ and $X\in\Graph(G)$. If (the underlying graph of) $X$ is connected, then so is the quotient graph $G\backslash X$.
\begin{proof}
Apply the left adjoint $\mathrm{Comp}$ to the epimorphism $X\to G\backslash X$.
\end{proof}
\end{lemma}

\begin{lemma}\label{selftrans}
The canonical action of $G$ on itself is transitive.
\begin{proof}
This follows from Yoneda as in Lemma \ref{selffree}, but let us also give a direct proof as an illustration.

Given a map $f\colon G\to C$ in $\mathbf{C}$ which coequalises $m_G$ and $\mathrm{pr}$, we would like to construct a dotted map
\[\begin{tikzcd}
G\times G \arrow[r,shift left,"m_G"]\arrow[r,shift right,',"\mathrm{pr}"] & G \arrow[r] \arrow[rd, "f"'] & * \arrow[d, dashed] \\
                    &                              & C                  
\end{tikzcd}\]
making the triangle commute, where $\mathrm{pr}$ denotes the projection onto the second copy of $G$. Once constructed, the dotted map is necessarily unique since $G\to *$ is epic. Define the dotted map to be the composition $$*\xrightarrow{e_G}G\xrightarrow{f}C.$$ Then, the triangle commutes because the two possible compositions below coincide: $$G\xrightarrow{(i_G,\mathrm{id}_G)}G\times G\overset{m_G}{\underset{\mathrm{pr}}{\rightrightarrows}}G\xrightarrow{f}C.$$
\end{proof}
\end{lemma}

\subsection{The Internal Orbit-Stabiliser Theorem}
\label{sec2-2}

In this subsection, we define and study an internal analogue of the Orbit-Stabiliser Theorem, which we abbreviate to $\mathrm{OST}$. Similar in importance to the usual Orbit-Stabiliser Theorem in group theory, $\mathrm{OST}$ will be used in multiple proofs later, such as the surjectivity part of the structure theorem (Theorem \ref{thmregepic}). Throughout this subsection, let $\mathbf{C}$ be a category with finite limits.

Let $G\in\Grp(\mathbf{C})$ and let $X\in\mathbf{C}(G)$ be a $G$-object. Suppose the quotient map $q\colon X\to G\backslash X$ exists and has a fixed section $s\colon G\backslash X\to X$ in $\mathbf{C}$, i.e.\ $q\circ s=\mathrm{id}_{G\backslash X}$. We then have a pullback functor $s^*\colon\mathbf{C}_{/X}\to\mathbf{C}_{/(G\backslash X)}$ which preserves finite limits, and hence group objects. Thus, we obtain from Lemma \ref{grpoblemma} a group object $s^*G_X\in\Grp(\mathbf{C}_{/(G\backslash X)})$.

\begin{defn}\label{defost}
Let $G\in\Grp(\mathbf{C})$, $X\in\mathbf{C}(G)$ and suppose the quotient map $q$ exists and has a fixed section $s$. These data are said to \emph{satisfy $\mathrm{OST}$} (which stands for the ``Orbit-Stabiliser Theorem") if the following diagram in $\mathbf{C}$ is a coequaliser: $$G\times s^*G_X\rightrightarrows G\times (G\backslash X)\xrightarrow{a\circ s} X.$$ Here, the two parallel maps are given by $\mathrm{id}_G\times (s^*G_X\to G\backslash X)$ and by $$G\times s^*G_X\to G\times G_X\to G\times G \times X\xrightarrow{m_G\times q} G\times (G\backslash X).$$ Alternatively, writing $\Delta\colon \mathbf{C}\to\mathbf{C}_{/(G\backslash X)}$ for the diagonal functor, the above definition is equivalent to saying that $\Delta(G)\xrightarrow{a\circ s} X$ is the quotient in $\mathbf{C}_{/(G\backslash X)}$ of $s^*G_X\in\Grp(\mathbf{C}_{/(G\backslash X)})$ acting on $\Delta(G)\in\mathbf{C}_{/(G\backslash X)}$ (on the right) through the canonical map $s^*G_X\inj\Delta(G)$.

We say that the category $\mathbf{C}$ \emph{satisfies $\mathrm{OST}$} if for every $G\in\Grp(\mathbf{C})$ and $X\in\mathbf{C}(G)$, the quotient $G\backslash X$ exists, and moreover, for every section of the quotient map, $\mathrm{OST}$ is satisfied in the above sense.
\end{defn}

Note that if a quotient map has no sections, then it does not affect whether $\mathbf{C}$ satisfies $\mathrm{OST}$. In addition, by considering the ``opposite group" $G^\mathrm{op}$, we see that if $\mathbf{C}$ satisfies $\mathrm{OST}$ for left actions, then it also satisfies $\mathrm{OST}$ for right actions.

Observe that the diagram $$G\times s^*G_X\rightrightarrows G\times (G\backslash X)\xrightarrow{a\circ s} X$$ given above in fact lives in $\mathbf{C}(G)$, where $G$ acts on itself for the first two objects. Thus, if $G\times(-)$ preserves coequalisers and this diagram is a coequaliser in $\mathbf{C}$, then it is automatically a coequaliser in $\mathbf{C}(G)$.

\begin{lemma}\label{ostkpair}
The parallel pair $G\times s^*G_X\rightrightarrows G\times (G\backslash X)$ of Definition \ref{defost} is the kernel pair of $G\times (G\backslash X)\xrightarrow{a\circ s} X$.
\begin{proof}
This can be checked directly, but once again it's much easier to use the same strategy as Lemma \ref{selffree} and consider the Yoneda embedding $y$. As such, the statement only has to be checked for $\mathbf{C}=\Set$, which is easy. One possible concern is that $y$ does not in general preserve coequalisers, but the proof for $\mathbf{C}=\Set$ only relies on $yG\times yX\rightrightarrows yX\to y(G\backslash X)$ being coequalising, not that it is a coequaliser.
\end{proof}
\end{lemma}

\begin{corr}\label{corrost}
The data of Definition \ref{defost} satisfy $\mathrm{OST}$ if and only if $G\times (G\backslash X)\xrightarrow{a\circ s} X$ is regular epic.
\end{corr}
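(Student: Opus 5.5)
This follows from Lemma \ref{ostkpair} together with the standard fact that a morphism is regular epic if and only if it is the coequaliser of its kernel pair. By Lemma \ref{ostkpair}, the parallel pair $G\times s^*G_X\rightrightarrows G\times (G\backslash X)$ of Definition \ref{defost} is the kernel pair of $f:=a\circ s\colon G\times(G\backslash X)\to X$. The data satisfy $\mathrm{OST}$ precisely when $f$ is the coequaliser of this pair, that is, of its own kernel pair.

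For the forward direction, if $\mathrm{OST}$ holds then $f$ is by definition a coequaliser of some parallel pair, hence regular epic.

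For the converse, suppose $f$ is the coequaliser of some pair $u,v\colon W\rightrightarrows G\times(G\backslash X)$. Write $k_0,k_1$ for the two maps $G\times s^*G_X\rightrightarrows G\times(G\backslash X)$, which form the kernel pair of $f$ and exist since $\mathbf{C}$ has finite limits. We check that $f$ coequalises $(k_0,k_1)$ universally.

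\begin{enumerate}[label=(\arabic*)]
\item Since $fu=fv$, there is a unique map $w\colon W\to G\times s^*G_X$ with $k_0w=u$ and $k_1w=v$.
\item Let $g\colon G\times(G\backslash X)\to C$ satisfy $gk_0=gk_1$. Then $gu=gk_0w=gk_1w=gv$.
\item Hence $g$ factors uniquely through $f$, because $f$ is the coequaliser of $(u,v)$.
\item The factorisation is also unique as a factorisation for the pair $(k_0,k_1)$, since $f$ is epic.
\end{enumerate}

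So $f$ is the coequaliser of its kernel pair, and $\mathrm{OST}$ holds.

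There is no real obstacle here; the content lies entirely in Lemma \ref{ostkpair}. The only point to watch is that the identification in that lemma is of the specific parallel maps of Definition \ref{defost}, not merely up to isomorphism of the object $G\times s^*G_X$. Even an isomorphism compatible with the two projections would suffice, since coequalisers are invariant under such isomorphisms.
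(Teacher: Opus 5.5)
Your proposal is correct and takes the same route as the paper, which treats the corollary as immediate from Lemma \ref{ostkpair} together with the standard fact that a regular epimorphism is the coequaliser of its kernel pair. Your steps (1)--(4) simply verify that standard fact directly, and they do so correctly.
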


Of course, we could have just used this to define $\mathrm{OST}$ in the first place. The reason we did not is that the parallel maps in Definition \ref{defost} are in the form we will use later and the diagram there also more closely resembles the classical Orbit-Stabiliser Theorem, as we will now see.

\begin{exmp}\label{egost}
\begin{enumerate}[label=(\roman*)]
\item\label{egost1} Consider $\mathbf{C}=\Set$, which has finite limits and coequalisers, and the product functors $C\times(-)$ preserve all colimits (since $\Set$ is Cartesian closed). Regular epimorphisms in $\Set$ are precisely surjections and the map $G\times (G\backslash X)\xrightarrow{a\circ s} X$ is always surjective, so $\Set$ satisfies $\mathrm{OST}$. Observe that the $\mathrm{OST}$ diagram being a coequaliser precisely says that $$X=\coprod_{x\in G\backslash X}G/G_{sx},$$ which expresses both the classical Orbit-Stabiliser Theorem and the decomposition of $X$ into its orbits.

In fact, assuming the axiom of choice (which we always do), the map $G\times (G\backslash X)\xrightarrow{a\circ s} X$ has a section, which is precisely a map $t\colon X\to G$ such that $t(x)sq(x)=x$ for all $x\in X$. That is, we are choosing, for each $x$, an element $t(x)\in G$ that maps $sq(x)$ to $x$, which lie in the same orbit.

\item\label{egost2} Consider $\mathbf{C}=\Pro$, the category of profinite spaces, which also has finite limits and coequalisers, and the product functors $C\times(-)$ preserve coequalisers. The last claim follows from the general fact that for any small category $\mathbf{D}$ with finite limits, inverse limits are exact in $\Pro(\mathbf{D})$ and the inclusion $\mathbf{D}\inj\Pro(\mathbf{D})$ preserves finite limits and all colimits.

Regular epimorphisms are still (continuous) surjections and, whenever a quotient map has a section $s$, the map $G\times (G\backslash X)\xrightarrow{a\circ s} X$ is surjective. Thus, $\Pro$ satisfies $\mathrm{OST}$, even though a quotient map need not have (continuous) sections. Nevertheless, if the action is free, then the quotient map always has a section. It is for this reason that free actions play a crucial role in profinite Bass--Serre theory.
\end{enumerate}
\end{exmp}

\begin{lemma}\label{freeost}
In the context of Definition \ref{defost}, assume that the action is free. Then, the data satisfy $\mathrm{OST}$ if and only if the map $G\times (G\backslash X)\xrightarrow{a\circ s} X$ is an isomorphism.
\begin{proof}
It is easy to see that for a free action, the two parallel maps of Definition \ref{defost} coincide.
\end{proof}
\end{lemma}

\subsection{Action Barr-exact Categories}
\label{sec2-3}

Having defined $\mathrm{OST}$, the next obvious step is to find some examples. The next proposition (Proposition \ref{propexactost}) will show that Barr-exact categories satisfy $\mathrm{OST}$, which gives us many examples of categories that satisfy $\mathrm{OST}$. However, Barr-exactness is in general too strong an assumption for us, since $\Pro$ satisfies $\mathrm{OST}$ but is not Barr-exact (see \cite[Example 5.6]{chaus}). As such, in Definition \ref{defweakexact}, we will define \emph{action Barr-exact categories}, a weakening of Barr-exactness which includes $\Pro$ and gives a nice sufficient condition for a category to satisfy $\mathrm{OST}$.

The following lemma justifies the definition we are about to make.

\begin{lemma}\label{lemmacong}
Suppose $\mathbf{C}$ is regular and $G\in\Grp(\mathbf{C})$ acts on $X\in\mathbf{C}$. Then, the (regular epi)-mono factorisation of the map $G\times X\xrightarrow{(a,\mathrm{pr}_X)} X \times X$ into $$G\times X\overset{e}{\sur} R\overset{m}{\inj} X \times X$$ establishes $R$ as a congruence on $X$.
\begin{proof}
Reflexivity is witnessed by the map $$X=*\times X\xrightarrow{e_G} G\times X\sur R.$$

Consider the map $f\colon G\times X\xrightarrow{(i_G,a)} G\times X$. Symmetry is then witnessed by the dotted map
\[
\begin{tikzcd}
G\times X \arrow[r, "e", two heads] \arrow[d, "e\circ f"'] & R \arrow[ld, dashed] \arrow[d, "\tau\circ m", hook] \\
R \arrow[r, "m"', hook]                                    & X\times X                                          
\end{tikzcd}
\]
where $\tau\colon X\times X\to X\times X$ swaps the two factors.

Finally, consider the pullback $(G\times X)\times_X(G\times X)$, where the first (resp.\ second) factor maps to $X$ via $\mathrm{pr}_X$ (resp.\ $a$), and similarly define the pullback $R\times_XR$. There is a canonical map $(G\times X)\times_X(G\times X)\to R\times_XR$, which is regular epic as $\mathbf{C}$ is regular. Define the map $f'$ to be the composition $$(G\times X)\times_X(G\times X)\to G\times G\times X\xrightarrow{m_G\times\mathrm{id}_X}G\times X\sur R,$$ where the first map ignores the first copy of $X$. Transitivity is then witnessed by the dotted map
\[\begin{tikzcd}
(G\times X)\times_X(G\times X) \arrow[d, "f'"'] \arrow[r, two heads] & R\times_X R \arrow[d] \arrow[ld, dashed] \\
R \arrow[r, "m"', hook]                                        & X\times X                               
\end{tikzcd}\]
\end{proof}
\end{lemma}

\begin{defn}\label{defweakexact}
We say that a regular category $\mathbf{C}$ is \emph{action Barr-exact} if every congruence that arises from group actions is effective. That is, whenever $G\in\Grp(\mathbf{C})$ acts on $X\in\mathbf{C}$, the congruence $R\rightrightarrows X$ from Lemma \ref{lemmacong} is effective, i.e.\ has a coequaliser and is the kernel pair of its coequaliser.
\end{defn}

It is, of course, obvious that Barr-exact categories are action Barr-exact. In addition, by considering opposite groups, we see that if $\mathbf{C}$ is action Barr-exact for left actions, then it is also action Barr-exact for right actions.

\begin{prop}\label{propexactost}
If $\mathbf{C}$ is action Barr-exact, then it satisfies $\mathrm{OST}$.
\begin{proof}
Suppose $G\in\Grp(\mathbf{C})$ acts on $X\in\mathbf{C}$. Let's factorise the map $(a,\mathrm{pr}_X)$ as in Lemma \ref{lemmacong} to obtain a congruence $R\rightrightarrows X$, which is effective by assumption. In particular, it has a coequaliser $G\backslash X$, which must also be the coequaliser of $G\times X\rightrightarrows X$ since $G\times X\sur R$ is epic. Moreover, $R$ is the kernel pair of the quotient map $q\colon X\to G\backslash X$ by assumption. We can assume that $q$ has a section $s$ (or else we are done). In view of Corollary \ref{corrost}, we would like to show that $G\times (G\backslash X)\xrightarrow{a\circ s} X$ is regular epic.

Note that the two maps $X\overset{\mathrm{id}_X}{\underset{s\circ q}{\rightrightarrows}}X$ are coequalised by $q$, so there is an induced map $f\colon X\to R$. Consider the pullback
\[\begin{tikzcd}
P \arrow[d] \arrow[r, two heads,"p"] & X \arrow[d, "f"] \\
G\times X \arrow[r, two heads]   & R               
\end{tikzcd}\]
where $p$ is regular epic because $\mathbf{C}$ is regular. We claim that $p\colon P\to X$ can be identified with $a\circ s$, which will finish the proof. Indeed, giving a map $Y \to P$ is the same as giving a map $Y\xrightarrow{(g,h,k)} (G\times X)\times X$ such that $h=sqk$ and $k=a(g,h)$. Eliminating $k$, this is the same as giving a map $Y\xrightarrow{(g,h)} G\times X$ such that $h=sqa(g,h)$, but since $G\times X\rightrightarrows X\to G\backslash X$ is a coequaliser, the last equation reduces to $h=sqh$. It only remains to note that the diagram $$G\backslash X\xrightarrow{s}X\overset{\mathrm{id}_X}{\underset{s\circ q}{\rightrightarrows}}X$$ is an equaliser.
\end{proof}
\end{prop}

\begin{exmp}\label{eglocallyweakly}
\begin{enumerate}[label=(\roman*)]
\item Let's think about what the proof of Proposition \ref{propexactost} means for $\mathbf{C}=\Set$. In this case, we have that $R=\{(gx,x)\colon g\in G, x\in X\}\subseteq X\times X$ and the map $f\colon X\to R$ sends $x$ to $(x,sq(x))$, which does live in $R$ because we can find some $g\in G$ that maps $sq(x)$ to $x$, which lie in the same orbit. The pullback $P$ is then the set $P=\{(g,x)\colon x=sq(x)\}$ and the map $p\colon P\to X$ sends $(g,x)$ to $gx$. This clearly agrees with the map $a\circ s\colon G\times(G\backslash X)\to X$, which sends $(g,y)$ to $gs(y)$.

In fact, an alternative and quicker (but somewhat indirect) proof of Proposition \ref{propexactost} is to first observe that it holds for $\mathbf{C}=\Set$, and then use the Barr-embedding theorem. We will use this method in Lemma \ref{stablemma}.
\item The category $\Pro$ is in fact action Barr-exact, which is easy to verify directly, but not Barr-exact. On the other hand, regularity alone is not sufficient to imply $\mathrm{OST}$.

Indeed, consider the category $\mathbf{TfAb}$ of torsion-free abelian groups, which is regular (see \cite[Counterexample 2.6.12]{handbook2}). Group objects in $\mathbf{TfAb}$ are just torsion-free abelian groups by Eckmann--Hilton, and it can be checked that the map $$\Z\times \Z\to\Z\colon (m,n)\mapsto 2m+n$$ defines an action of the first copy of $\Z$ on the second in $\mathbf{TfAb}$. The quotient of this action is the trivial group $0$, so the quotient map has a (unique) section $s\colon 0\to\Z$, but the map $$\Z\times 0\xrightarrow{a\circ s}\Z\colon m\mapsto 2m$$ is not regular epic, since regular epimorphisms in $\mathbf{TfAb}$ are precisely surjections.

We have thus shown that the implications Barr-exact $\Rightarrow$ Action Barr-exact $\Rightarrow$ Regular are not reversible.
\item\label{eglocallyweakly3} We say that a category $\mathbf{C}$ is \emph{locally action Barr-exact} if every overcategory $\mathbf{C}_{/Y}$ is action Barr-exact. Every Barr-exact category is locally Barr-exact, hence locally action Barr-exact. It is also clear that every locally action Barr-exact category with a terminal object is action Barr-exact.

It turns out that $\Pro$ is actually also locally action Barr-exact. To see this, let $G\in\Grp(\Pro_{/Y})$ act on $X\in\Pro_{/Y}$ and write $G_y$ (resp.\ $X_y$) for the fibre of $G$ (resp.\ $X$) above $y\in Y$. If we can show that the fibrewise topological quotient $X/{\sim}$ is profinite, where $x,x'\in X_y$ are related if there exists $g\in G_y$ such that $gx=x'$, then the claim would follow easily. Now, note that the equivalence relation ${\sim}\subseteq X\times X$, being the image of $$G\times_YX\xrightarrow{(a,\mathrm{pr}_X)}X\times X,$$ is closed, so $X/{\sim}$ is compact Hausdorff. Total disconnectedness follows from the fact that $Y$ and all fibres $G_y\backslash X_y$ are totally disconnected.
\end{enumerate}
\end{exmp}



\section{Graphs of Groups and Their Fundamental Groups}
\label{sec3}

Our eventual goal is to prove, under suitable assumptions, a categorical Bass--Serre structure theorem for groups acting on trees with tree quotients (Theorem \ref{mainthm}). To do so, we first need to define (internal) fundamental groups. In Sections \ref{sec3-1} and \ref{sec3-2}, we will study $\mathbf{C}$-internal colimits in $\Grp(\mathbf{C})$ and use them to define fundamental groups of graphs of groups. In Theorem \ref{thmregepic} of Section \ref{sec3-3}, we will prove the surjectivity part of the structure theorem.

\subsection{Graphs of Groups}
\label{sec3-1}

Let us first recall the classical structure theorem for $\mathbf{C}=\Set$. Let $G$ be a group acting on a tree $X$. In general, the quotient map $q\colon X\to G\backslash X$ has a section $s$ in $\Set$, but not necessarily in $\Graph=\Graph(\Set)$. Nevertheless, if we assume that the quotient $G\backslash X$ is also a tree, then the quotient map $q$ in fact has a section $s$ in $\Graph$. As in the paragraph before Definition \ref{defost}, we obtain a group object $s^*G_X\in\Grp(\Graph_{/(G\backslash X)})$. In fact, $s^*G_X$ is precisely the \emph{graph of groups associated to the action}. Before we explain why this is equivalent to the classical notion of graphs of groups, let us make the following definition.

\begin{defn}\label{defbary}
Let $X\in\Graph=\Graph(\Set)$. The \emph{barycentric subdivision\footnote{This is defined and denoted by $\int X$ in \cite[Definition 2.2.1]{sdecomp}. If $X$ is the underlying graph of a category, then our $SX^\mathrm{op}$ is precisely the \emph{subdivision category} defined in \cite[Section IX.5]{maccat}.} of $X$} is the category $SX$ with object set $X=VX\coprod EX$, and where there are exactly two non-identity morphisms $e\to d_0(e)$, $e\to d_1(e)$ for every edge $e\in EX$. Composition is defined trivially, since two non-identity morphisms can never be composed. A \emph{graph of groups over $X$} is a functor $SX\to\Grp$.\footnote{It is often assumed in the literature that $\G$ has to map morphisms of $SX$ to \emph{injective} group homomorphisms, but we do not make this assumption.}
\end{defn}

Now, for any $Y\in\Graph$, there is an equivalence of categories $[SY,\Set]=\Graph_{/Y}$ (see Proposition \ref{baryeqprop}), which is really just a special case of the familiar equivalence $$\left[\int Y,\Set\right]=[(\boldsymbol{\cdot}\rightrightarrows\boldsymbol{\cdot}),\Set]_{/Y}.$$ Thus, a graph of groups over $G\backslash X$ is equivalently an object of $\Grp(\Graph_{/(G\backslash X)})$. As a functor $S(G\backslash X)\to\Grp$, the graph of groups $s^*G_X$ we obtained earlier sends $x\in G\backslash X$ to the stabiliser group $G_{sx}$, and sends a morphism $e\to d_i(e)$ to the inclusion of stabilisers $G_{se}\inj G_{sd_i(e)}=G_{d_is(e)}$, which is clearly just the classical notion.

For the classical structure theorem, we next define the \emph{fundamental group of $s^*G_X\colon S(G\backslash X)\to\Grp$} to be $\pi_1(s^*G_X)=\colim s^*G_X$ (cf.\ Example \ref{imptex}) and show that the canonical map of groups $\pi_1(s^*G_X)\to G$ is an isomorphism.

Let's return to a general category $\mathbf{C}$ with finite limits and let $Q\in\Graph(\mathbf{C})$.

Consider the diagonal functor $$\Delta\colon\mathbf{C}\to\Graph(\mathbf{C})_{/Q},\,\,\,\, C\mapsto (C\times EQ\rightrightarrows C\times VQ).$$ This functor preserves finite products, so induces a functor on group objects which is also denoted by $\Delta$: $$\Delta\colon\Grp(\mathbf{C})\to\Grp(\Graph(\mathbf{C})_{/Q}).$$

\begin{defn}\label{fundgrp}
A \emph{graph of groups over $Q$} is an object of $\Grp(\Graph(\mathbf{C})_{/Q})$.

Assume that the functor $$\Delta\colon\Grp(\mathbf{C})\to\Grp(\Graph(\mathbf{C})_{/Q})$$ defined above has a left adjoint, which we denote by $\colim$. Then, we define the \emph{fundamental group of a graph of groups $\G\in\Grp(\Graph(\mathbf{C})_{/Q})$} to be $\pi_1(\G)=\colim \G\in\Grp(\mathbf{C})$.
\end{defn}

The most important examples of graphs of groups are those that arise from group actions. Let $G\in\Grp(\mathbf{C})$, $X\in\Graph(G)$ and suppose the quotient map $q$ has a fixed section $s$ in $\Graph(\mathbf{C})$. As before, we obtain a group object $s^*G_X\in\Grp(\Graph(\mathbf{C})_{/(G\backslash X)})$, the \emph{graph of groups associated to the action}. If the diagonal functor $$\Delta\colon\Grp(\mathbf{C})\to\Grp(\Graph(\mathbf{C})_{/(G\backslash X)})$$ has a left adjoint $\colim$, then we can define the fundamental group $\pi_1(s^*G_X)=\colim s^*G_X\in\Grp(\mathbf{C})$.

We have now made enough definitions to attempt to find conditions for the canonical map $\pi_1(s^*G_X)\to G$ to be an isomorphism. However, to better motivate why we have called the above left adjoint ``$\colim$" and to formulate an internal version of the equivalence $$[S(G\backslash X),\Grp]=\Grp(\Graph_{/(G\backslash X)}),$$ we take a brief detour.

\subsection{Internal Colimits}
\label{sec3-2}

In this subsection, let $\mathbf{C}$ be a category with finite limits. Often, $\mathbf{C}$ will be a lextensive category. By \cite[Propositions 4.5 and 4.8]{extensive}, $\mathbf{C}$ is then in particular locally distributive, i.e.\ $A\times_D(B\coprod C)=(A\times_D B)\coprod (A\times_D C)$. The important takeaway from this part is Proposition \ref{constructcolim2}.

\begin{defn}
Let $\mathbf{C}$ be a lextensive category. Given a graph $X\in\Graph(\mathbf{C})$, the \emph{barycentric subdivision of $X$} is the $\mathbf{C}$-internal category $SX$ defined as follows.
\begin{itemize}
\item It has object of objects $(SX)_0=VX\coprod EX\in\mathbf{C}$.
\item It has object of morphisms $(SX)_1=VX\coprod EX\coprod EX\coprod EX\in\mathbf{C}$, where the first $VX\coprod EX$ represents the identity maps.
\item The middle copy of $EX$ in $(SX)_1$ has source map $EX\inj VX\coprod EX$ and target map $EX\xrightarrow{d_0} VX\inj VX\coprod EX$.
\item The final copy of $EX$ in $(SX)_1$ has source map $EX\inj VX\coprod EX$ and target map $EX\xrightarrow{d_1} VX\inj VX\coprod EX$.
\item Since $\mathbf{C}$ is extensive, the following pullback is initial for $i=0,1$:
\[
\begin{tikzcd}
\varnothing \arrow[r] \arrow[d] & EX \arrow[d, "d_i"] \\
EX \arrow[r, hook]              & VX\coprod EX       
\end{tikzcd}
\]
Consequently, by local distributivity, there is a unique way to define composition in $SX$ that satisfies the axioms of internal categories. Informally, two morphisms in $SX$ cannot be composed unless one of them is an identity map.
\end{itemize}
\end{defn}

Let $[SX,\mathbf{C}]_{\mathbf{C}}$ denote the (ordinary) category of $\mathbf{C}$-internal functors from $SX$ to $\mathbf{C}$.

\begin{prop}\label{baryeqprop}
Let $\mathbf{C}$ be a lextensive category. There is an equivalence $$[SX,\mathbf{C}]_{\mathbf{C}}=\Graph(\mathbf{C})_{/X}.$$
\begin{proof}
Let $P\in[SX,\mathbf{C}]_{\mathbf{C}}$ with total object $P_0\in\mathbf{C}$ and structure maps $$p_0\colon P_0\to (SX)_0,\,\,\,\, p_1\colon (SX)_1\times_{(SX)_0}P_0\to P_0.$$ By extensivity, we can write $p_0$ as the coproduct of maps $VP_0\to VX$ and $Ep_0\colon EP_0\to EX$. Then, by lextensivity and the axioms of internal functors, defining the map $p_1$ is the same as defining maps $f_i\colon EX\times_{(SX)_0} EP_0=EP_0\to P_0$ for $i=0,1$ (corresponding to the middle and final copies of $EX$ in $(SX)_1$), such that the following diagram commutes:
\[\begin{tikzcd}
EP_0 \arrow[r, "f_i"] \arrow[d, "Ep_0"'] & P_0 \arrow[d, "p_0"] \\
EX \arrow[r, "d_i"']                     & (SX)_0              
\end{tikzcd}\]
But this implies that $f_i$ factors through $VP_0$ and the commutative diagram exactly says that $f_0, f_1\colon EP_0\rightrightarrows VP_0$ is an object in $\Graph(\mathbf{C})_{/X}$.

The above defines a functor $[SX,\mathbf{C}]_{\mathbf{C}}\to\Graph(\mathbf{C})_{/X}$. Its inverse is easy to construct and left as an exercise.
\end{proof}
\end{prop}

\begin{rmk}
If $X\in\Graph(\mathbf{C})$ has $EX=\varnothing$, then the above proposition is just the standard equivalence $[C^\delta,\mathbf{C}]_{\mathbf{C}}=\mathbf{C}_{/C}$, where $C^\delta$ denotes the discrete internal category associated to $C\in\mathbf{C}$ (see \cite[Proposition 8.2.5]{handbook}).
\end{rmk}

Under the equivalence in Proposition \ref{baryeqprop}, the diagonal functor $$\Delta\colon\mathbf{C}\to\Graph(\mathbf{C})_{/(G\backslash X)},\,\,\,\, C\mapsto (C\times(G\backslash EX)\rightrightarrows C\times(G\backslash VX))$$ given before becomes $$\Delta\colon\mathbf{C}\to [S(G\backslash X),\mathbf{C}]_{\mathbf{C}},\,\,\,\, C\mapsto C\times (S(G\backslash X))_0=(C\times (G\backslash EX))\coprod(C\times (G\backslash VX)).$$

In fact, for any $\mathbf{C}$-internal category $A$, there is such a diagonal functor $\Delta_A\colon \mathbf{C}\to[A,\mathbf{C}]_{\mathbf{C}}$, which has a left adjoint if $\mathbf{C}$ has finite limits and coequalisers (see \cite[Proposition 8.3.2]{handbook}). Given $P\in[A,\mathbf{C}]_{\mathbf{C}}$, it is standard to call the image of $P$ under the left adjoint the \emph{internal colimit of $P$} and so reasonable to denote it by $\colim P$.

Similarly, there is an induced functor $\Grp(\mathbf{C})\to\Grp([A,\mathbf{C}]_{\mathbf{C}})$, which is also denoted by $\Delta_A$. If $P\in\Grp([A,\mathbf{C}]_{\mathbf{C}})$ has a reflection along $\Delta_A$, we also call the reflection the \emph{(internal) colimit of $P$} and denote it by $\colim P$. If $\Delta_A$ has a left adjoint, we say that \emph{$\Grp(\mathbf{C})$ has all colimits of shape $A$}. If there is such a left adjoint for every $A$, we say that \emph{$\Grp(\mathbf{C})$ has all colimits internal to $\mathbf{C}$}. We have the following familiar fact, which only requires $\mathbf{C}$ to have finite limits.

\begin{prop}\label{constructcolim}
If $\Grp(\mathbf{C})$ has (ordinary) coequalisers and (internal) colimits of shapes $A_0^\delta$ and $A_1^\delta$, then it has (internal) colimits of shape $A$.
\begin{proof}
Given $P\in\Grp([A,\mathbf{C}]_{\mathbf{C}})$, we can construct $\colim P=\colim_AP$ as the coequaliser of two appropriate maps from $\colim_{A_1^\delta}(A_1\times_{A_0}P_0)$ to $\colim_{A_0^\delta}P_0$, exactly as in \cite[Theorem V.2.1]{maccat} or \cite[Theorem 2.3]{jc4}.
\end{proof}
\end{prop}

When $\mathbf{C}$ has finite limits but is not necessarily extensive, we might not be able to make the identification $[SX,\mathbf{C}]_{\mathbf{C}}=\Graph(\mathbf{C})_{/X}$ for a given $X\in\Graph(\mathbf{C})$. Nevertheless, we can still construct $\colim$ using coproducts and coequalisers.

\begin{prop}\label{constructcolim2}
If $\Grp(\mathbf{C})$ has (ordinary) coequalisers and (internal) colimits of shapes $VX^\delta$ and $EX^\delta$, then the diagonal functor $\Delta\colon\Grp(\mathbf{C})\to\Grp(\Graph(\mathbf{C})_{/X})$ has a left adjoint $\colim$.
\begin{proof}
Write $\G\in\Grp(\Graph(\mathbf{C})_{/X})$ as
\[\begin{tikzcd}
E\G \arrow[d] \arrow[r,shift left]\arrow[r,shift right] & V\G \arrow[d] \\
EX \arrow[r,shift left]\arrow[r,shift right]             & VX            
\end{tikzcd}\]
We can then construct $\colim\G$ as the coequaliser of two appropriate maps from $\colim_{EX^\delta}E\G$ to $\colim_{VX^\delta}V\G$. More precisely, the two maps are the adjoints of $E\G\to(\colim_{VX^\delta}V\G)\times EX$, where the first component is the composition $$E\G\xrightarrow{d_i} V\G\to(\colim_{VX^\delta}V\G)\times VX\to\colim_{VX^\delta}V\G$$ for $i=0,1$.
\end{proof}
\end{prop}

This detour has hopefully justified the notation $\pi_1(s^*G_X)=\colim s^*G_X\in\Grp(\mathbf{C})$ from earlier.

\subsection{The Regular Epimorphism $\pi\to G$}
\label{sec3-3}

The goal of this subsection is to prove the ``surjectivity" part of the structure theorem, namely that the canonical map $\pi=\pi_1(s^*G_X)\to G$ is regular epic under suitable assumptions. In this subsection, let $\mathbf{C}$ be a category with finite limits and coequalisers which are preserved by the functors $C\times(-)$, and assume that $\mathbf{C}$ satisfies $\mathrm{OST}$ (see Definition \ref{defost}).

Let $G\in\Grp(\mathbf{C})$, $X\in\Graph(G)$ and suppose the quotient map $q$ has a fixed section $s$ in $\Graph(\mathbf{C})$. By assumption, $s$ satisfies $\mathrm{OST}$, in the sense that $$G\times s^*G_X\rightrightarrows G\times (G\backslash X)\xrightarrow{a\circ s} X$$ is a coequaliser in $\Graph(\mathbf{C})$. Note that there is an abuse of notation here just like in Definition \ref{stabdef}.

Assume that the diagonal functor $$\Delta\colon\Grp(\mathbf{C})\to\Grp(\Graph(\mathbf{C})_{/(G\backslash X)})$$ has a left adjoint $\colim$. Let $\pi=\pi_1(s^*G_X)=\colim s^*G_X\in\Grp(\mathbf{C})$ be the fundamental group as given in Definition \ref{fundgrp}.

There is a canonical map $\varphi\colon\pi\to G$, which is the adjoint of the composition $$s^*G_X\to G_X\to G \times X\to G\times (G\backslash X) =\Delta(G).$$ Note that this composition is related to the second map of the parallel pair in Definition \ref{defost}.

We have made a lot of separate assumptions! For the convenience of the reader, let us summarise them here. We have assumed that:
\begin{itemize}
\item $\mathbf{C}$ is a category with finite limits and coequalisers which are preserved by the functors $C\times(-)$.
\item $\mathbf{C}$ satisfies $\mathrm{OST}$.
\item $G\in\Grp(\mathbf{C})$ and $X\in\Graph(G)$, such that the quotient map has a fixed section $s$ in $\Graph(\mathbf{C})$.
\item The diagonal functor $\Delta$ given above has a left adjoint, so that we can define the fundamental group $\pi$ of the action.
\end{itemize}

\begin{thm}\label{thmregepic}
With the assumptions above, if $X$ is connected, then the map $\varphi\colon\pi\to G$ is regular epic in $\mathbf{C}$.
\begin{proof}
The group object $\pi$ acts naturally on $G$ through $\varphi$ (on the right say), so we can consider the quotient object $G/\pi$, i.e.\ the following coequaliser in $\mathbf{C}$: $$G\times\pi\rightrightarrows G\xrightarrow{\mathrm{coeq}}G/\pi.$$ Here, the two parallel maps are the projection map onto $G$ and the composition $$G\times\pi\xrightarrow{\mathrm{id}_G\times\varphi} G\times G\xrightarrow{m_G}G.$$ Applying $\Delta\colon\mathbf{C}\to\Graph(\mathbf{C})_{/(G\backslash X)}$ and then composing with the map $$\mathrm{id}_{\Delta(G)}\times(s^*G_X\to\Delta(\pi)=\pi\times(G\backslash X)),$$ we obtain the following coequalising diagram in $\Graph(\mathbf{C})_{/(G\backslash X)}$: $$\Delta(G)\times s^*G_X\rightrightarrows \Delta(G)\to\Delta(G/\pi).$$ Its underlying diagram in $\Graph(\mathbf{C})$ is $$G\times s^*G_X\rightrightarrows G\times(G\backslash X)\to\Delta(G/\pi),$$ where the parallel pair is exactly the pair in the $\mathrm{OST}$ diagram for $s$. We therefore obtain a map $X\to\Delta(G/\pi)$ in $\Graph(\mathbf{C})_{/(G\backslash X)}$ and, by Lemma \ref{diaglemma}, its adjoint $\mathrm{Comp}(X)=*\to G/\pi$.

Next, recall that $\Delta$ preserves not only limits, but also coequalisers, by Lemma \ref{deltacoeq}. Consequently, the map $\Delta(G)\to\Delta(G/\pi)$ is regular epic, so the map $X\to\Delta(G/\pi)$ is epic. By Lemmas \ref{deltaff} and \ref{qconnected}, we conclude that $\mathrm{Comp}(X)=*\to G/\pi$ is also epic, so $G/\pi=*$.

Finally, note that $e_G\colon *\to G$ is a section of the quotient map $G\to G/\pi=*$. The composition $\pi=(G/\pi)\times \pi\xrightarrow{a\circ e_G}G$, which is regular epic by the $\mathrm{OST}$ assumption, is just the map $\varphi$.
\end{proof}
\end{thm}

\section{The Standard Graph and the Structure Theorem}
\label{sec4}

In this section, we establish the categorical Bass--Serre structure theorem (Theorem \ref{mainthm}), which gives sufficient conditions for the canonical map $\varphi\colon\pi\to G$ to be an isomorphism, and also a partial converse to the structure theorem for locally action Barr-exact categories (Proposition \ref{propgraphconv}). We define the standard graph $T$ in Section \ref{sec4-1} and give a sufficient condition for $T$ to be connected in Section \ref{sec4-2}. We will never actually define what it means for a connected graph $X$ to be a tree. Instead, the statement ``the canonical map $T\to X$ has a section in $\Graph(\mathbf{C})$" in Theorem \ref{mainthm} should be viewed as an algebraic way of saying that $X$ is a tree (cf.\ Example \ref{mainthmeg}).

\subsection{The Standard Graph}
\label{sec4-1}

We continue with the following general assumptions:
\begin{itemize}
\item $\mathbf{C}$ is a category with finite limits and coequalisers which are preserved by the functors $C\times(-)$.
\item $Q\in\Graph(\mathbf{C})$.
\item The diagonal functor $\Delta\colon\Grp(\mathbf{C})\to\Grp(\Graph(\mathbf{C})_{/Q})$ has a left adjoint $\colim$, so that we can define the fundamental group $\pi=\pi_1(\G)=\colim\G$ of a graph of groups $\G\in\Grp(\Graph(\mathbf{C})_{/Q})$.
\end{itemize}

Of course, the natural example is when $Q=G\backslash X$ for a $G$-graph $X$ with graph section $s$ and $\G=s^*G_X$.

\begin{defn}\label{stdgraph}
Given a graph of groups $\G\in\Grp(\Graph(\mathbf{C})_{/Q})$, its \emph{standard graph $T=T(\G)$} is the following coequaliser in $\Graph(\mathbf{C})$ (and hence in $\Graph(\pi)$): $$\pi\times \G\rightrightarrows \pi\times Q\xrightarrow{\mathrm{coeq}} T.$$ Here, the two parallel maps are given by $\mathrm{id}_\pi\times (\G\to Q)$ and by $$\pi\times \G\to \pi\times\Delta(\pi)=\pi\times\pi\times Q\xrightarrow{m_\pi} \pi\times Q.$$
\end{defn}

\begin{rmk}\label{rmktransport}
We remind the reader that here we are only dealing with the case where there are no transport elements, i.e.\ $Q$ is a ``tree", even though we have not defined trees! Recall that in the last section, we assumed that the quotient map $X\to G\backslash X$ has a \emph{graph} section. If $\mathbf{C}=\Set$, then this happens precisely when there are no transport elements. Similarly, for $\mathbf{C}=\Set$, our definitions of the fundamental group and the standard graph only agree with the classical notions when $Q$ is a tree.
\end{rmk}

\begin{rmk}\label{rmkspecialset}
It is a special property of $\mathbf{C}=\Set$ that if a graph of groups $\G\colon SQ\to\Grp$ sends morphisms of $SQ$ to injective homomorphisms, where $Q$ is a tree, then the vertex/edge groups $\G(x)$, $x\in Q$ embed into $\pi$, or equivalently the map $\G\to\Delta(\pi)$ is monic. In fact, the analogous claim already fails for $\mathbf{C}=\Pro$ (see \cite[Section 6.4]{ribesgraph}). However, this problem disappears if the graph of groups $\G$ comes from the action of a group object $G$ on a graph $X$. In this case, the composition $\G=s^*G_X\to\Delta(\pi)\to\Delta(G)$ is monic, so the first map is also monic.
\end{rmk}

\begin{lemma}\label{samequo}
We have an isomorphism $\pi\backslash T=Q$ in $\Graph(\mathbf{C})$.
\begin{proof}
Recall from Lemma \ref{selftrans} that the canonical action of a group object on itself is transitive. We then obtain the claim from the following diagram of coequalisers, where the bottom left parallel maps clearly coincide.
\[\begin{tikzcd}
\pi\times\pi\times\G \arrow[r,shift left]\arrow[r,shift right] \arrow[d,shift left]\arrow[d,shift right] & \pi\times\pi\times Q \arrow[r] \arrow[d,shift left]\arrow[d,shift right] & \pi\times T \arrow[d,shift left]\arrow[d,shift right] \\
\pi\times \G \arrow[r,shift left]\arrow[r,shift right] \arrow[d]          & \pi\times Q\arrow[d]       \arrow[r]              & T \arrow[d]           \\
\G \arrow[r,shift left]\arrow[r,shift right]                              & Q\arrow[r]                               & \pi\backslash T      
\end{tikzcd}\]
\end{proof}
\end{lemma}

\begin{lemma}\label{sectionpsi}
The map $$t\colon\pi\backslash T=Q=*\times Q\xrightarrow{e_\pi} \pi\times Q\to T$$ is a section of the quotient map $T\to\pi\backslash T$.
\begin{proof}
This follows from the bottom right square of the diagram in the proof of Lemma \ref{samequo}.
\end{proof}
\end{lemma}

\begin{prop}\label{propgraphconv}
Suppose that $\mathbf{C}$ is also locally action Barr-exact and that the map $\G\to\Delta(\pi)$ is monic in $\Graph(\mathbf{C})_{/Q}$. Then, we have a canonical isomorphism $\G=t^*\pi_T$.
\begin{proof}
In fact, we only need the overcategories $\mathbf{C}_{/VQ}$ and $\mathbf{C}_{/EQ}$ to be action Barr-exact. The first observation is that the parallel maps $$\pi\times \G\rightrightarrows \pi\times Q\xrightarrow{\mathrm{coeq}} T$$ which define the standard graph $T$ can be viewed as the action-projection maps of the group object $\G\in\Grp(\Graph(\mathbf{C})_{/Q})$ acting on $\Delta(\pi)=\pi\times Q$ in $\Graph(\mathbf{C})_{/Q}$. The claim then follows by applying the next lemma to $$V\G\inj\Delta(\pi)=\pi\times VQ$$ in the category $\mathbf{D}=\mathbf{C}_{/VQ}$ and the analogous monomorphism in $\mathbf{C}_{/EQ}$.
\end{proof}
\end{prop}

\begin{lemma}\label{stablemma}
Suppose $\mathbf{D}$ is action Barr-exact. Let $f\colon L\inj H$ be a monic map in $\Grp(\mathbf{D})$. Let $u$ be the composition $$*\xrightarrow{e_H}H\to H/L.$$ Then, the canonical map $L\to u^*H_{H/L}$ is an isomorphism, where $H_{H/L}$ is the stabiliser of $H$ acting on the quotient $H/L$.
\begin{proof}
First, observe that this is true for $\mathbf{D}=\Set$. Indeed, the stabiliser of $e_HL\in H/L$ in $H$ is precisely $e_HLe_H^{-1}=L$.

In general, let's first show that there is even an action of $H$ on $H/L$. Note that the map $H\times L\xrightarrow{(\mathrm{pr}_H,a)}H\times H$ is monic, since $f$ is monic (in $\Grp(\mathbf{D})$ and hence in $\mathbf{D}$). By action Barr-exactness, we deduce that $H\times L$ is the kernel pair of $H\to H/L$. In particular, as $\mathbf{D}$ is regular, the functor $H\times(-)$ preserves the coequaliser $$H\times L\rightrightarrows H\to H/L,$$ so the action of $H$ on itself induces an action on $H/L$.

Next, let's construct the canonical map $L\to u^*H_{H/L}$. It's easy to see that $$L\overset{f}{\underset{e_H}{\rightrightarrows}}H\to H/L$$ is coequalising, so there is an induced map
\[\begin{tikzcd}
L \arrow[rd] \arrow[d, "g"', dashed] &                       &     \\
H_{H/L} \arrow[r, "\mathrm{eq}"']    & H\times H/L \arrow[r, shift left]\arrow[r, shift right] & H/L
\end{tikzcd}\]
and therefore an induced map
\[\begin{tikzcd}
L \arrow[rdd, bend right] \arrow[rrd, "g", bend left] \arrow[rd, dashed,"h"] &                                &                   \\
                                                                         & u^*H_{H/L} \arrow[d] \arrow[r] & H_{H/L} \arrow[d] \\
                                                                         & * \arrow[r, "u"']              & H/L              
\end{tikzcd}\]

We now use the Barr-embedding theorem, which says that every $\emph{small}$ regular category can be embedded regularly into a presheaf category $[I,\Set]$. If $\mathbf{D}$ is not small, we can consider the smallest full subcategory $\mathbf{D}_0\subseteq\mathbf{D}$ (which will be small and regular) that contains all the objects in question and that is closed under finite limits and finite colimits taken in $\mathbf{D}$. This embedding preserves finite limits and, importantly, the quotient $H/L$ because it preserves regular epimorphisms and kernel pairs. The result then follows because the image of $h$ in $[I,\Set]$ is an isomorphism.
\end{proof}
\end{lemma}

\subsection{Connectedness}
\label{sec4-2}

We would like to know when the standard graph $T$ is connected. In classical Bass--Serre theory (i.e.\ when $\mathbf{C}=\Set$), this follows from the fact that $\pi$ is, by definition, \emph{generated} by the $\G(x)$, $x\in Q$. We follow the same general assumptions as the last subsection and additionally assume that $Q\in\Graph(\mathbf{C})$ is connected.

\begin{defn}\label{defgen}
Let $G\in\Grp(\mathbf{C})$ and $U\to G$ be a map in $\mathbf{C}$. We say that \emph{$U\to G$ generates $G$}, or simply that \emph{$U$ generates $G$}, if the following diagram is a coequaliser in $\mathbf{C}$: $$G\times U\rightrightarrows G\to*.$$ Here, the two parallel maps are the projection map onto $G$ and the composition $$G\times U\to G\times G \xrightarrow{m_G}G.$$
\end{defn}

\begin{lemma}\label{genlemma}Suppose that $G,H\in\Grp(\mathbf{C})$, that there are two maps $V\to U$ and $U\to G$ in $\mathbf{C}$, and that there is a map $G\to H$ in $\Grp(\mathbf{C})$.
\begin{enumerate}[label=(\roman*)]
\item\label{genlemma1} If $V\to U\to G$ generates $G$, then $U\to G$ generates $G$. The converse holds if $V\to U$ is regular epic.
\item\label{genlemma2} If $U\to G$ generates $G$ and $G\to H$ is epic as a map in $\mathbf{C}$, then $U\to G\to H$ generates $H$.
\end{enumerate}
\begin{proof}
Consider the diagram of coequalisers
\[\begin{tikzcd}
G\times V \arrow[d] \arrow[r,shift left]\arrow[r,shift right] & G \arrow[d,equal] \arrow[r] & C \arrow[d] \\
G\times U \arrow[r,shift left]\arrow[r,shift right]           & G \arrow[r]           & C'          
\end{tikzcd}\]
from which the first part of \ref{genlemma1} follows. For the second part of \ref{genlemma1}, note that the left vertical map is regular epic and hence epic, so the right vertical map is an isomorphism. Part \ref{genlemma2} follows similarly from the following diagram.
\[\begin{tikzcd}
G\times U \arrow[d] \arrow[r,shift left]\arrow[r,shift right] & G \arrow[d] \arrow[r] & C \arrow[d] \\
H\times U \arrow[r,shift left]\arrow[r,shift right]           & H \arrow[r]           & C'          
\end{tikzcd}\]
\end{proof}
\end{lemma}

Recall that we have two diagonal functors $$\mathbf{C}\to\Graph(\mathbf{C})_{/Q}\,\,\,\,\text{and}\,\,\,\,\Grp(\mathbf{C})\to\Grp(\Graph(\mathbf{C})_{/Q}),$$ which are both denoted by $\Delta$, with their respective left adjoints $\mathrm{Comp}$ and $\colim$. In particular, there is a canonical map $\mathrm{Comp}(\G)\to\pi$ in $\mathbf{C}$.

\begin{prop}\label{stdconn}
If the map $\mathrm{Comp}(\G)\to\pi$ generates $\pi$, then the standard graph $T$ is connected.
\begin{proof}
Applying $\mathrm{Comp}$ to the coequaliser diagram in Definition \ref{stdgraph} that defines $T$ and using the fact that $Q$ is connected, we obtain the coequaliser diagram $$\pi\times\mathrm{Comp}(\G)\rightrightarrows\pi\to\mathrm{Comp}(T).$$ It only remains to observe that the parallel pair here coincides with the pair that defines the generation of $\pi$ from $\mathrm{Comp}(\G)$.
\end{proof}
\end{prop}

\begin{rmk}\label{stdrmk}
\begin{enumerate}[label=(\roman*)]
\item\label{stdrmk1} For $\G\in\Grp(\Graph(\mathbf{C})_{/Q})$, let $V\G$ denote its vertex object, i.e.\ the part of $\G$ lying above $VQ$. By Lemma \ref{genlemma}\ref{genlemma1}, we have that $\mathrm{Comp}(\G)$ generates $\pi$ if and only if $V\G$ generates $\pi$. Informally, this is asking for $\pi$ to be generated by the vertex groups $\G(x)$, $x\in VQ$.

In fact, by Proposition \ref{constructcolim2} and Lemma \ref{genlemma}\ref{genlemma2}, if $\Grp(\mathbf{C})$ has the relevant colimits and the forgetful functor $\Grp(\mathbf{C})\to\mathbf{C}$ preserves epimorphisms (or regular epimorphisms, which holds if $\mathbf{C}$ is regular say), then $V\G$ generates $\pi$ provided that $V\G$ generates the internal coproduct $\colim_{VQ^\delta}V\G\in\Grp(\mathbf{C})$. By internal coproducts, we just mean internal colimits in $\Grp(\mathbf{C})$ of discrete shapes $C^\delta$.
\item Let's think about what generation means when $\mathbf{C}=\Set$. Note that if $U\inj G$, then the coequaliser of $G\times U\rightrightarrows G$ is $G/{\sim}$, where ${\sim}$ is the smallest equivalence relation that identifies $g$ with $gu$ for $g\in G$ and $u\in U$. Of course, the coequaliser is $*$ if and only if every element of $G$ can be written as a finite word in $U^{\pm1}$, if and only if $U$ generates $G$ in the usual sense.
\item Consider $\mathbf{C}=\Pro$. If $U$ is a closed subset of a profinite group $G$, then it is still true that $U$ generates $G$ as in Definition \ref{defgen} if and only if $U$ (topologically) generates $G$ in the usual sense. Indeed, for the if direction, we have that $\langle U \rangle\subseteq G$ is dense, so a continuous map $G\to Z$ which is constant on $\langle U \rangle$ must be constant on $G$. Conversely, the quotient map $G\to G/\overline{\langle U \rangle}$ factors through $*$, so $G=\overline{\langle U \rangle}$.
\item For both $\mathbf{C}=\Set$ and $\mathbf{C}=\Pro$, it is true that $V\G$ generates $\pi$ (or even the free product $\colim_{VQ^\delta}V\G\in\Grp(\mathbf{C})$), which follows from the constructions of their respective free products. For the profinite case, see \cite[Proposition 5.1.6]{ribesgraph}.
\item For the case where no transport elements are involved, Proposition \ref{stdconn} gives an alternate, more categorical proof of (a special case of) \cite[Proposition 6.3.4]{ribesgraph}.
\end{enumerate}
\end{rmk}

\subsection{The Structure Theorem}
\label{sec4-3}

We now assume the setup of Theorem \ref{thmregepic}. Let $G\in\Grp(\mathbf{C})$ and $X\in\Graph(G)$ be connected such that the quotient map $X\to G\backslash X$ has a section $s$ in $\Graph(\mathbf{C})$. We can then form the standard graph $T=T(s^*G_X)$, which is connected if $s^*G_{VX}$ generates $\pi=\colim s^*G_X$, by Remark \ref{stdrmk}\ref{stdrmk1}.

Let $\varphi\colon\pi\to G$ be the canonical map, which is regular epic as a map in $\mathbf{C}$ by Theorem \ref{thmregepic}. There is also a map $\Phi\colon T\to X$ in $\Graph(\mathbf{C})$ given as follows.
\[\begin{tikzcd}
\pi\times s^*G_X \arrow[r,shift left]\arrow[r,shift right] \arrow[d,"\varphi"] & \pi\times(G\backslash X) \arrow[d,"\varphi"] \arrow[r] & T \arrow[d, dashed,"\Phi"] \\
G\times s^*G_X \arrow[r,shift left]\arrow[r,shift right]             & G\times(G\backslash X) \arrow[r,"a\circ s"]             & X                               
\end{tikzcd}\]
Let $\Ker\varphi$ be the kernel of $\varphi$, i.e.\ the equaliser $$\Ker\varphi\xrightarrow{\mathrm{eq}}\pi\overset{\varphi}{\underset{e_G}{\rightrightarrows}}G.$$

\begin{lemma}\label{quophi}
The canonical action of $\Ker\varphi$ on $T$ has quotient $X$. In fact, the associated quotient map is precisely $\Phi$.
\begin{proof}
Let $K=\Ker\varphi$. Consider the following diagram in $\Graph(\mathbf{C})$:
\[\begin{tikzcd}
K\times\pi\times s^*G_X \arrow[d, shift left] \arrow[d, shift right] \arrow[r, shift left] \arrow[r, shift right]                          & K\times\pi\times(G\backslash X) \arrow[r] \arrow[d, shift left] \arrow[d, shift right]    & K\times T \arrow[d, shift left] \arrow[d, shift right]       \\
\pi\times s^*G_X \arrow[r, shift left] \arrow[r, shift right] \arrow[d, "\varphi"] & \pi\times(G\backslash X) \arrow[d, "\varphi"] \arrow[r] & T \arrow[d, "\Phi", dashed] \\
G\times s^*G_X \arrow[r, shift left] \arrow[r, shift right]                        & G\times(G\backslash X) \arrow[r, "a\circ s"]            & X                          
\end{tikzcd}\]
Here, the top row is formed by applying $K\times(-)$ to the middle row which is a coequaliser by definition, so all three rows are coequalisers by our assumptions on $\mathbf{C}$. The vertical maps between the top two rows are the projection maps (i.e.\ ignore $K$) and the action maps of $K$ on $\pi$ or $T$. It is not difficult to see that all suitable squares in the diagram commute, e.g.\ in the top left square, the two action maps and the two projections $s^*G_X\to G\backslash X$ form a commutative square.

We claim that the left two columns are also coequalisers. If this were the case, then we would conclude that the rightmost column is a coequaliser as well, which would complete the proof. To prove the claim, it suffices to show that $K\times\pi\rightrightarrows\pi\to G$ is a coequaliser (i.e.\ the First Isomorphism Theorem!).

The key observation is that this is simply the $\mathrm{OST}$ diagram for $\pi$ acting on $G$ through $\varphi$. Indeed, we have from the proof of Theorem \ref{thmregepic} that $\varphi$ can be identified with the map $$\pi=*\times\pi=(G/\pi)\times\pi\xrightarrow{a\circ e_G} G,$$ so it only remains to show that the stabiliser $e_G^*\pi_G$ is precisely $K$. Giving a map into the pullback $e_G^*\pi_G$ is the same as giving a map into $G\times\pi$ whose first component is $e_G$ and which equalises $$G\times\pi\overset{m_G\circ\varphi}{\underset{\mathrm{pr}_G}{\rightrightarrows}}G,$$ but this is clearly the same as giving a map into $K$, as required.
\end{proof}
\end{lemma}

For the next theorem, we shall assume the hypotheses before and in Theorem \ref{thmregepic}, as well as the following:
\begin{itemize}
\item The standard graph $T$ is connected (see Proposition \ref{stdconn} for a sufficient condition).
\item The canonical action of $\Ker\varphi$ on $T$ is free.
\end{itemize}

\begin{rmk}
The assumption that $\Ker\varphi$ acts freely on $T$ may seem arbitrary, but we will see below, in Proposition \ref{exactfree}, that this always holds if we additionally assume that $\mathbf{C}$ is locally action Barr-exact.

In fact, we will later define the notion of \emph{Bass--Serre categories} (Definition \ref{bscat}), which implies most of the assumptions made above and is easier to verify.
\end{rmk}

\begin{thm}\label{mainthm}
With the assumptions above, the following are equivalent.
\begin{enumerate}[label=(\roman*)]
\item\label{mainthm1} The quotient map $\Phi\colon T\to X$ has a section in $\Graph(\mathbf{C})$.
\item\label{mainthm2} The quotient map $\Phi\colon T\to X$ is an isomorphism in $\Graph(\mathbf{C})$.
\item\label{mainthm3} The canonical map $\varphi\colon\pi\to G$ is an isomorphism in $\mathbf{C}$ (and hence in $\Grp(\mathbf{C})$).
\end{enumerate}
\begin{proof}
The implications \ref{mainthm3} $\Rightarrow$ \ref{mainthm2} $\Rightarrow$ \ref{mainthm1} are obvious, so we only have to prove \ref{mainthm1} $\Rightarrow$ \ref{mainthm3}. In view of Theorem \ref{thmregepic}, it only remains to show that $\varphi$ is monic. By Lemma \ref{freeost}, we can make the identification $T=\Ker\varphi\times X$. But since $T$ is connected, this implies that $\Ker\varphi=*$. It then follows by Yoneda that $\varphi$ is monic.
\end{proof}
\end{thm}

\begin{prop}\label{exactfree}
If $\mathbf{C}$ is locally action Barr-exact in addition to the assumptions before Theorem \ref{thmregepic}, then the canonical action of $\Ker\varphi$ on $T$ is free. 
\begin{proof}
As in the proof of Proposition \ref{propgraphconv}, we note that $$\pi\times s^*G_X\rightrightarrows \pi\times (G\backslash X)\xrightarrow{\mathrm{coeq}} T$$ can be viewed as the action-projection maps of $s^*G_X\in\Grp(\Graph(\mathbf{C})_{/(G\backslash X)})$ acting on $\Delta(\pi)$ in $\Graph(\mathbf{C})_{/(G\backslash X)}$. The claim then follows by applying the next lemma to $$s^*G_{VX}\to\Delta(\pi)=\pi\times V(G\backslash X)\xrightarrow{\Delta(\varphi)}\Delta(G)=G\times V(G\backslash X)$$ in the category $\mathbf{C}_{/V(G\backslash X)}$ and the analogous sequence in $\mathbf{C}_{/E(G\backslash X)}$.
\end{proof}
\end{prop}

\begin{lemma}
Suppose $\mathbf{D}$ is action Barr-exact. Let $$L\xrightarrow{f}H\xrightarrow{g}G$$ be in $\Grp(\mathbf{D})$, such that the composition $g\circ f$ is monic. Let $K=\Ker g$. Then, the canonical action of $K$ on the quotient $H/L\in\mathbf{D}$ is free.
\begin{proof}
This is very similar to the proof of Lemma \ref{stablemma}, so we shall be brief. First, observe that this is true for $\mathbf{D}=\Set$. Indeed, an element of $H/L$ can be represented by $hL$, whose stabiliser in $H$ is $hLh^{-1}$. But if $hf(l)h^{-1}\in K$, then $gf(l)=e_G$, so $l=e_L$.

In general, we deduce by action Barr-exactness that $H\times L$ is the kernel pair of $H\to H/L$ and then use the Barr-embedding theorem.
\end{proof}
\end{lemma}

\section{Bass--Serre Categories: Definition and Examples}
\label{sec5}

Now that we have proven a categorical structure theorem of Bass--Serre theory (Theorem \ref{mainthm}), we should investigate when the assumptions of the theorem are met. Let us list the assumptions below and think about how common each one is.
\begin{itemize}
\item $\mathbf{C}$ has finite limits and coequalisers which are preserved by the functors $C\times(-)$.
\end{itemize}
These are common conditions with many well-studied examples and closure properties. For example, if $\mathbf{C}$ is Cartesian closed, then $C\times(-)$ preserves all colimits, so in particular coequalisers. As another example, if $\mathbf{C}$ is a small category with the property in the bullet point, then $\Pro(\mathbf{C})$ has the same property too (cf.\ Example \ref{egost}\ref{egost2}).
\begin{itemize}
\item $\mathbf{C}$ satisfies $\mathrm{OST}$.
\end{itemize}
By Proposition \ref{propexactost}, this is true in all action Barr-exact categories, which include Barr-exact categories and $\Pro$. Alternatively, one can hope that in a nice category, regular epimorphisms are precisely ``surjections" and it is easy to check if $G\times(G\backslash X)\to X$ is surjective.
\begin{itemize}
\item The action of $\Ker\varphi$ on the standard graph $T$ is free.
\end{itemize}
By Proposition \ref{exactfree}, this is true if we assume that $\mathbf{C}$ is locally action Barr-exact (in addition to some of the other assumptions), which again include Barr-exact categories and $\Pro$ (see Example \ref{eglocallyweakly}\ref{eglocallyweakly3}).
\begin{itemize}
\item The diagonal functor $\Delta\colon\Grp(\mathbf{C})\to\Grp(\Graph(\mathbf{C})_{/(G\backslash X)})$ has a left adjoint $\colim$, and the standard graph $T$ is connected.
\end{itemize}
As we saw in Remark \ref{stdrmk}, this holds for $\mathbf{C}=\Set$ and $\mathbf{C}=\Pro$. In general, however, this seems to very much depend on whether $\Grp(\mathbf{C})$ has internal coproducts (see Proposition \ref{constructcolim2}) and how they are constructed.
\begin{itemize}
\item The quotient map $X\to G\backslash X$ has a section in $\Graph(\mathbf{C})$.
\end{itemize}
This property is a major obstacle in profinite Bass--Serre theory (see \cite[Section 6.6]{ribesgraph}). Indeed, \cite[Theorem 6.6.1]{ribesgraph} has to assume that $G\backslash X$ is finite to guarantee a section\footnote{A so-called \emph{fundamental 0-section} in the language of \cite{ribesgraph}, which is not in general a \emph{graph} section. But we remind the reader again that we are only dealing with the case of tree quotients in this paper.} of $X\to G\backslash X$.

This condition is different in flavour from the ones above it, in that it should be viewed as a condition on the specific group and graph objects involved, rather than just on the ambient category $\mathbf{C}$.

\begin{defn}\label{bscat}
We say that a category $\mathbf{C}$ is \emph{Bass--Serre} if it satisfies the following.
\begin{itemize}
\item $\mathbf{C}$ is locally action Barr-exact with a terminal object, as well as coequalisers which are preserved by the functors $C\times(-)$.
\item $\Grp(\mathbf{C})$ has coequalisers and $\mathbf{C}$-internal coproducts (see Proposition \ref{constructcolim2}). Moreover, for any $C\in\mathbf{C}$ and any $H\in\Grp(\mathbf{C}_{/C})$, we have that $H$ generates $\colim H=\colim_{C^\delta} H$, in the sense that $$(\colim H)\times H\rightrightarrows\colim H\to*$$ is a coequaliser (see Definition \ref{defgen} and also Remark \ref{stdrmk}\ref{stdrmk1}).
\end{itemize}
\end{defn}

The following is simply a restatement of Theorem \ref{mainthm}.

\begin{thm}\label{mainthmre}
Let $\mathbf{C}$ be a Bass--Serre category. Suppose $G\in\Grp(\mathbf{C})$ acts on a connected $X\in\Graph(\mathbf{C})$ such that the quotient map $X\to G\backslash X$ has a section in $\Graph(\mathbf{C})$. Then, the following are equivalent.
\begin{enumerate}[label=(\roman*)]
\item\label{mainthmre1} The quotient map $\Phi\colon T\to X$ has a section in $\Graph(\mathbf{C})$.
\item\label{mainthmre2} The quotient map $\Phi\colon T\to X$ is an isomorphism in $\Graph(\mathbf{C})$.
\item\label{mainthmre3} The canonical map $\varphi\colon\pi\to G$ is an isomorphism in $\mathbf{C}$ (and hence in $\Grp(\mathbf{C})$).
\end{enumerate}
\end{thm}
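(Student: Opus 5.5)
The plan is to reduce Theorem \ref{mainthmre} to Theorem \ref{mainthm}. That means checking, one by one, that a Bass--Serre category satisfies the standing hypotheses listed before Theorem \ref{thmregepic} and before Theorem \ref{mainthm}. Once they hold, the equivalence of \ref{mainthmre1}--\ref{mainthmre3} is literally the conclusion of Theorem \ref{mainthm}.

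First, the hypotheses on the ambient category. By Definition \ref{bscat}, $\mathbf{C}$ has a terminal object and coequalisers preserved by $C\times(-)$. Being locally action Barr-exact, each slice $\mathbf{C}_{/Y}$ is regular, and these slices have pullbacks. Together with the terminal object, this gives finite limits in $\mathbf{C}\cong\mathbf{C}_{/*}$. Also by Example \ref{eglocallyweakly}\ref{eglocallyweakly3}, $\mathbf{C}$ is action Barr-exact, so $\mathbf{C}$ satisfies $\mathrm{OST}$ by Proposition \ref{propexactost}. The section $s$ and the connectedness of $X$ are hypotheses of the theorem.

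Next, the existence of $\pi$. Definition \ref{bscat} gives coequalisers and $\mathbf{C}$-internal coproducts in $\Grp(\mathbf{C})$, in particular of shapes $V(G\backslash X)^\delta$ and $E(G\backslash X)^\delta$. Proposition \ref{constructcolim2} then yields the left adjoint $\colim$ of $\Delta\colon\Grp(\mathbf{C})\to\Grp(\Graph(\mathbf{C})_{/(G\backslash X)})$, so $\pi$, $\varphi$, $T$ and $\Phi$ are all defined. The freeness of the $\Ker\varphi$-action on $T$ is exactly Proposition \ref{exactfree}, using local action Barr-exactness.

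The remaining and main point is that $T$ is connected. Here I would use Proposition \ref{stdconn}. The quotient $G\backslash X$ is connected by Lemma \ref{qconnected}. It then suffices, by Remark \ref{stdrmk}\ref{stdrmk1}, to show that $V\G$ generates $\pi$, where $\G=s^*G_X$.

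The generation axiom of Definition \ref{bscat}, applied with $C=V(G\backslash X)$ and $H=V\G$, says that $V\G$ generates $\colim_{V(G\backslash X)^\delta}V\G$. To transfer this to $\pi$, I apply Lemma \ref{genlemma}\ref{genlemma2} to the canonical map $\colim_{V(G\backslash X)^\delta}V\G\to\pi$. This map is a coequaliser in $\Grp(\mathbf{C})$ by Proposition \ref{constructcolim2}, hence a regular epimorphism there.

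The delicate step is that Lemma \ref{genlemma}\ref{genlemma2} needs this map to be epic in $\mathbf{C}$, not merely in $\Grp(\mathbf{C})$. My approach is to show that the forgetful functor $\Grp(\mathbf{C})\to\mathbf{C}$ preserves regular epimorphisms, via the First Isomorphism Theorem argument implicit in Lemma \ref{quophi}:
\begin{itemize}
\item Factor the map in $\mathbf{C}$ as a regular epi followed by a mono, using regularity.
\item Check, using that regular epis are stable under products in a regular category, that the image inherits a group structure making both factors homomorphisms.
\item Conclude that the mono part is a monic homomorphism through which a coequaliser in $\Grp(\mathbf{C})$ factors, so it is an isomorphism.
\end{itemize}
Hence the map is regular epic in $\mathbf{C}$, Lemma \ref{genlemma}\ref{genlemma2} applies, and $T$ is connected. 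Theorem \ref{mainthm} then finishes the proof.
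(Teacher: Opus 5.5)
Your proposal is correct and follows the paper's own route. The paper treats Theorem \ref{mainthmre} as a restatement of Theorem \ref{mainthm}, with the hypotheses checked in the discussion opening Section \ref{sec5} and in Remark \ref{stdrmk}\ref{stdrmk1}, exactly as you do; your image-factorisation argument that $\Grp(\mathbf{C})\to\mathbf{C}$ preserves regular epimorphisms when $\mathbf{C}$ is regular correctly fills in a step the paper only asserts.
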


The above ``structure theorem" says in particular that a group acting on a connected graph with appropriate sections can be recovered from the associated graph of groups. We also obtain from Proposition \ref{propgraphconv} (together with Remark \ref{rmkspecialset}) the following converse to the structure theorem, which says that a graph of groups can sometimes be recovered from the associated action.

\begin{thm}\label{mainconv}
Let $\mathbf{C}$ be a Bass--Serre category. Suppose $Q\in\Graph(\mathbf{C})$ is a graph and $\G\in\Grp(\Graph(\mathbf{C})_{/Q})$ is a graph of groups such that $\G\to\Delta(\pi)=\pi\times Q$ is monic. Then, we have a canonical isomorphism $\G=t^*\pi_T$.

In particular, if $\G=s^*G_X$ comes from the action of a group object $G$ on a graph $X$, then $\G\to\Delta(\pi)$ is always monic and therefore $s^*G_X=t^*\pi_T$.
\end{thm}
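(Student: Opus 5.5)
The first statement is Proposition \ref{propgraphconv} applied to the Bass--Serre category $\mathbf{C}$. So the first step is to check its hypotheses. The proof of Proposition \ref{propgraphconv} only uses the following:
\begin{itemize}
\item the standing assumptions of Section \ref{sec4-1}, namely finite limits, coequalisers preserved by $C\times(-)$, and the existence of $\colim$ for $\Delta\colon\Grp(\mathbf{C})\to\Grp(\Graph(\mathbf{C})_{/Q})$;
\item action Barr-exactness of $\mathbf{C}_{/VQ}$ and $\mathbf{C}_{/EQ}$;
\item the monicity of $\G\to\Delta(\pi)$.
\end{itemize}

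Local action Barr-exactness with a terminal object gives regularity of $\mathbf{C}$, hence finite limits. It also gives action Barr-exactness of every slice, in particular of $\mathbf{C}_{/VQ}$ and $\mathbf{C}_{/EQ}$. Coequalisers preserved by $C\times(-)$ are part of Definition \ref{bscat}. For the existence of $\colim$ I will invoke Proposition \ref{constructcolim2}. Its inputs are that $\Grp(\mathbf{C})$ has coequalisers and internal colimits of the discrete shapes $VQ^\delta$ and $EQ^\delta$, which is exactly the requirement that $\Grp(\mathbf{C})$ have $\mathbf{C}$-internal coproducts. With these checks the isomorphism $\G=t^*\pi_T$ follows, where $t$ is the section of Lemma \ref{sectionpsi}.

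For the "in particular" part, take $\G=s^*G_X$ and use the argument of Remark \ref{rmkspecialset}. The composite
$$s^*G_X\to\Delta(\pi)\xrightarrow{\Delta(\varphi)}\Delta(G)=G\times(G\backslash X)$$
is, by construction of $\varphi$ as the adjoint map, the composite
$$s^*G_X\to G_X\xrightarrow{\mathrm{eq}}G\times X\xrightarrow{\mathrm{id}\times q}G\times(G\backslash X).$$
This factors as a pullback of the equaliser $G_X\to G\times X$ along $s$, followed by the identification
$$s^*(G\times X)=G\times(G\backslash X),$$
both taken over $G\backslash X$. Pulling back along $s$ preserves monomorphisms, so the composite is monic. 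Hence the first factor $s^*G_X\to\Delta(\pi)$ is monic, and the first part of the theorem applies, giving $s^*G_X=t^*\pi_T$.

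The only step needing some care is confirming that the composite above really is the pulled-back equaliser followed by the identification. It suffices to note that the map $s^*G_X\to G\times(G\backslash X)$ over $G\backslash X$ is the pullback along $s$ of $G_X\to G\times X$ over $X$. To see this, compose with $\mathrm{id}\times q$ and use $q\circ s=\mathrm{id}$. I do not expect a genuine obstacle here: everything is a repackaging of earlier results.
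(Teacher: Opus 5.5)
Your proposal is correct and follows the paper's own route: the theorem is obtained from Proposition~\ref{propgraphconv}, and in the special case $\G=s^*G_X$ the required monicity comes from the argument of Remark~\ref{rmkspecialset}. Your additional checks fill in details the paper leaves implicit: the hypotheses of Proposition~\ref{propgraphconv} (regularity of $\mathbf{C}$ and its slices from local action Barr-exactness, and existence of $\colim$ via Proposition~\ref{constructcolim2}), and the identification of $s^*G_X\to\Delta(G)$ with the pullback along $s$ of the equaliser $G_X\to G\times X$.
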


From our earlier discussions, we have the following two important examples of Bass--Serre categories.

\begin{prop}\label{bssetpro}
The categories $\Set$ and $\Pro$ are Bass--Serre.
\end{prop}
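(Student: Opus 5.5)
The plan is to verify the clauses of Definition \ref{bscat} one at a time, first for $\Set$ and then for $\Pro$, leaning on results already established in the paper.

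\emph{The first clause.}
\begin{itemize}
\item For $\Set$, Barr-exactness of every slice $\Set_{/Y}$ (each slice is itself a topos, equivalently $\Set^Y$) gives local action Barr-exactness. $\Set$ has a terminal object and all coequalisers. Since $\Set$ is cartesian closed, each $C\times(-)$ preserves coequalisers.
\item For $\Pro$, local action Barr-exactness is Example \ref{eglocallyweakly}\ref{eglocallyweakly3}. Existence of coequalisers and their preservation by $C\times(-)$ are Example \ref{egost}\ref{egost2}. The terminal object is the one-point space.
\end{itemize}

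\emph{The second clause: coequalisers and internal coproducts in $\Grp(\mathbf{C})$.}
\begin{itemize}
\item For $\Set$, $\Grp$ is cocomplete. An object $H\in\Grp(\Set_{/C})$ is a $C$-indexed family of groups $(H_c)_{c\in C}$. The internal colimit of shape $C^\delta$ is the ordinary free product $\ast_{c}H_c$. The universal property is immediate: a group map from $H$ to $\Delta(K)=K\times C$ over $C$ is the same as a family of homomorphisms $H_c\to K$.
\item For $\Pro$, coequalisers in profinite groups exist: take the closed normal subgroup generated by the relevant elements and form the quotient. The internal coproduct of a profinite group $H$ over a profinite space $C$ (a continuous family $H\to C$ of fibre groups) is the free profinite product over the profinite space $C$ of \cite[Section 5.1]{ribesgraph}. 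I will cite its existence and universal property there, noting that a morphism $H\to K\times C$ in $\Grp(\Pro_{/C})$ is exactly a continuous map $H\to K$ that is a homomorphism on each fibre, which matches the definition used in \cite{ribesgraph}.
\end{itemize}

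\emph{The generation condition.}
\begin{itemize}
\item For $\Set$, the image of $H$ in $\ast_c H_c$ is the union of the factors, which generates the free product in the usual sense. Remark \ref{stdrmk}(ii) then gives the coequaliser condition. That remark is stated for subsets $U\inj G$, but $H\to\colim H$ need not be injective. This is harmless: the coequaliser of $G\times U\rightrightarrows G$ depends only on the image of $U$.
\item For $\Pro$, \cite[Proposition 5.1.6]{ribesgraph} says the image of $H$ topologically generates the free profinite product. The image of a compact set is closed, so Remark \ref{stdrmk}(iii) applies to the image, and the same observation reduces to it.
\end{itemize}

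The step I expect to be the main obstacle is the identification, for $\Pro$, of the left adjoint to $\Delta\colon\Grp(\Pro)\to\Grp(\Pro_{/C})$ with the free profinite product over a profinite space of \cite{ribesgraph}. There the construction is phrased for ``sheaves of profinite groups'' over $C$, so I need to check that these agree with group objects in $\Pro_{/C}$. If necessary, I will instead construct the adjoint directly, as an inverse limit over finite quotients: take $\varprojlim$ of free products of finite groups indexed by finite quotients of $C$, then profinitely complete. Universality reduces to the finite case, because every morphism to a finite group factors through some finite stage.
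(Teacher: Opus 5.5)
Your proposal is correct and is essentially the paper's argument: the paper gives no separate proof, and simply assembles Example~\ref{egost}, Example~\ref{eglocallyweakly}, Remark~\ref{stdrmk} and the free profinite products over a profinite space from \cite{ribesgraph}, exactly as you do. The identification you flag as the main obstacle is routine, because a group object $p\colon H\to C$ in $\Pro_{/C}$ is precisely a sheaf of profinite groups in the sense of \cite{ribesgraph} (continuity of $(x,y)\mapsto xy^{-1}$ on $H\times_C H$ forces the identity section to be continuous, since $x\mapsto xx^{-1}$ is continuous and $p$ is a quotient map), so your fallback construction is not needed; as sketched, that fallback would in any case first require writing $H$ itself as an inverse limit of finite group objects over finite sets.
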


\begin{exmp}\label{mainthmeg}
\begin{enumerate}[label=(\roman*)]
\item Let $\mathbf{C}=\Set$. As we pointed out in Remark \ref{rmktransport}, our notions only agree with the classical ones when $G\backslash X$ is a tree, in which case the quotient map $X\to G\backslash X$ does have a graph section, by \cite[Proposition I.2.6]{dicksdun}.

Let us also justify our previous claim that the condition ``$T\to X$ has a graph section" of Theorem \ref{mainthmre} should be viewed as a proxy for saying that $X$ is a tree. Indeed, by \cite[Proposition I.2.6]{dicksdun} again, if $X$ is a tree, then the quotient map $T\to X$ has a graph section. This does not use the fact that $T=X$ or that $T$ is a tree. On the other hand, if $T\to X$ has a graph section, then $T=X$ from our theorem, so $X$ is a tree because $T$ is.

Importantly, this does not result in a cyclic argument if we simply wish to apply the structure theorem to deduce the structure of a group. Suppose $G$ acts on a tree $X$, such that the quotient $G\backslash X$ is also a tree. Then, we have from above that $T\to X$ has a graph section \emph{without} using the fact that $T=X$ or that $T$ is a tree. Theorem \ref{mainthmre} allows us to deduce that $X=T$ and $G=\pi$.

\item Let $\mathbf{C}=\Pro$. Our notions only agree with the usual ones when $G\backslash X$ is simply connected\footnote{This is different from saying that $G\backslash X$ is a profinite tree: see \cite[Section 3.10]{ribesgraph}.} as a profinite graph, for example when it is a finite tree. In the case that $G\backslash X$ is a finite tree, the quotient map $X\to G\backslash X$ does have a graph section.

Like before, if $X$ is simply connected, then the quotient map $T\to X$ has a graph section, because $T$ is a Galois covering of $X$ (see \cite[Section 3.1]{ribesgraph}). This does not use the fact that $T=X$ or that $T$ is simply connected. The converse, however, relies on these facts (see \cite[Theorem 6.3.5]{ribesgraph}). Nevertheless, if a profinite group $G$ acts on a simply connected profinite graph $X$, such that the quotient $G\backslash X$ is a finite tree, then we can still use Theorem \ref{mainthmre} to deduce that $X=T$ and $G=\pi$.
\end{enumerate}
\end{exmp}

Here are more examples of common categories which are Bass--Serre.

\begin{prop}\label{bspresheaf}
For any small category $I$, the presheaf category $[I,\Set]$ is Bass--Serre.
\begin{proof}
Since $[I,\Set]$ is Barr-exact with coequalisers and Cartesian closed, the first bullet point of Definition \ref{bscat} follows. It is also clear that $\Grp([I,\Set])=[I,\Grp]$ has coequalisers, so the first non-trivial point is the existence of internal coproducts. It's easy to check directly that the diagonal functor $\Delta\colon[I,\Grp]\to \Grp([I,\Set]_{/Y})$ has a left adjoint $\colim$ which can be computed pointwise, for $f\colon F\to Y\in\Grp([I,\Set]_{/Y})$, via $$\colim F(i)=*_{y\in Y(i)}F(i,y),$$ where $*$ denotes the free product of groups and $F(i,y)=f(i)^{-1}(y)$. The generation axiom is equivalent to checking that for each $i\in I$, the set $F(i)=\coprod_{y\in Y(i)}F(i,y)$ generates the group $\colim F(i)=*_{y\in Y(i)}F(i,y)$ in the usual sense, which is obvious.
\end{proof}
\end{prop}

\begin{rmk}
In the proof of Proposition \ref{bspresheaf}, we could have also used the identification $[I,\Set]_{/Y}=[\int Y,\Set]$. Then, the diagonal functor $\Delta\colon[I,\Grp]\to [\int Y,\Grp]$ is just composition with $p\colon\int Y\to I$, so its left adjoint $\colim$ is simply the left Kan extension $\mathrm{Lan}_p$, which exists because $\Grp$ is cocomplete. However, knowing its existence is not sufficient for the proof. We needed to know how it is constructed to check the generation axiom.
\end{rmk}

\begin{prop}\label{bssheaf}
For any small Grothendieck site $I$, the Grothendieck topos $\mathrm{Sh}(I,\Set)$ is Bass--Serre.
\begin{proof}
The first bullet point of Definition \ref{bscat} is once again obvious, and $\Grp(\mathrm{Sh}(I,\Set))=\mathrm{Sh}(I,\Grp)$ has coequalisers which are computed as the sheafifications of the presheaf coequalisers. Internal coproducts of group objects can similarly be computed as the sheafifications of the formula in the proof of Proposition \ref{bspresheaf}. That is, given some $f\colon F\to Y\in\Grp(\mathrm{Sh}(I,\Set)_{/Y})$, we define $\colim F\in\mathrm{Sh}(I,\Grp)$ to be the sheafification of the presheaf $$i\mapsto *_{y\in Y(i)}F(i,y).$$ The generation axiom follows from properties of sheafification.
\end{proof}
\end{prop}

\begin{rmk}
The last two propositions clearly give us many examples of Bass--Serre categories. However, to apply Theorem \ref{mainthmre}, we still need the quotient maps $X\to G\backslash X$ and $T\to X$ to have graph sections, which in the context of the last two propositions are natural transformations. Even if we know that a presheaf $G\backslash X\in\Graph([I,\Set])=[I,\Graph]$ is pointwise a tree, there is no guarantee that a \emph{natural} section $G\backslash X\to X$ exists. Thus, from our perspective, a major obstacle in generalising Bass--Serre theory, just like in the profinite case, is the lack of sections.
\end{rmk}

\appendix
\section{Fundamental Groups as Weighted 2-colimits}
\label{appendixA}

In the main body of the paper, we focused on the case where there are no transport elements, i.e.\ when $G\backslash X$ is a tree (see Remark \ref{rmktransport}). Of course, we know that classical Bass--Serre theory works for any quotient $G\backslash X$. The goal of this appendix is to suggest an idea for how we can extend the paper to include the general case by considering $\mathbf{C}=\Set$, and why it is probably more technical. We will assume basic knowledge of Bass--Serre theory and of 2-category theory. Throughout the appendix, we only work with the base category $\mathbf{C}=\Set$. Also, let $\Cat$ denote the category of small categories.

The reason for making the following definition will be justified in Example \ref{imptex} and Proposition \ref{thm1}, where we show that classical fundamental groups of graphs of groups can be described as weighted (strict) 2-categorical colimits.

\begin{defn}\label{def1}
Let $X$ be a connected graph and $T$ a maximal subtree. Recall from Definition \ref{defbary} that $SX\in\Cat$ denotes the barycentric subdivision of $X$.

We define the \emph{weight associated to $({X},T)$} to be the functor $W=W({X},T)\colon{SX}^\mathrm{op}\to\Cat$ given as follows. On objects, $W$ sends every element of $T$ to the terminal category $*$ and every element of the complement ${X}-T$ (which must be an edge) to the interval category $\mathbf{2}=(0\to 1)$. Given a non-identity morphism $e^i\colon e\to d_i(e)$ in ${SX}$, where $e\notin ET$, the map $W(e^i)\colon*\to\mathbf{2}$ sends the unique object of $*$ to $i\in\mathbf{2}$.
\end{defn}

Let us also remind the reader what the general fundamental group (including transport elements) of a graph of groups $\G\colon SX\to\Grp$ is.

\begin{defn}\label{def2}
Let ${X}$ be a connected graph with maximal subtree $T$ and $\G$ be a graph of groups over ${X}$. The \emph{fundamental group of $\G$ with respect to $T$} is the group $\pi=\pi_1(\G,T)$ (which is independent of $T$ up to isomorphism) given by the following presentation. It has generators $$\left(\bigsqcup_{v\in {VX}}\G(v)\right)\sqcup\{t_e\colon e\in EX\}$$ and relations $$\left(\bigsqcup_{v\in {VX}}\{\text{relations in }\G(v)\}\right)\sqcup\{t_e[\G(e^0)(g)]t_e^{-1}=\G(e^1)(g)\colon e\in EX, g\in \G(e)\}\sqcup\{t_e=1\colon e\in ET\}.$$
\end{defn}

In the following, we view $\Grp$ as a 2-category by embedding it into the 2-category $\Cat$ in the usual way, i.e.\ by viewing each group as a one-object category. Notice that given $G,H\in\Grp$ and two 1-morphisms (i.e.\ group homomorphisms) $\varphi,\psi\colon G\to H$, a 2-morphism $t\colon \varphi\Rightarrow\psi$ is the same as an element $t\in H$ such that $t\varphi(g)t^{-1}=\psi(g)$ for all $g\in G$.

\begin{exmp}\label{imptex}
\begin{enumerate}[label=(\roman*)]
\item Suppose ${X}$ is a star graph. That is, let $Y$ be any set and define $VX=Y\sqcup\{p\}$, $EX=\overline{Y}\cong Y$, $d_0(\overline{y})=p$ and $d_1(\overline{y})=y$. This is a tree, so the only maximal subtree is $T={X}$.

Let $\G$ be the graph of groups which assigns every edge of ${X}$, as well as the star vertex $p$, the trivial group, and which assigns a vertex $y\in Y$ some group $G_y$. The fundamental group of $\G$ (with respect to $T$) is the free product (i.e.\ coproduct in $\Grp$) of the $G_y$, which we denote by $\coprod_YG_y$.

The weight $W$ associated to $T$ is trivial, i.e.\ it sends every object of ${SX}$ to $*\in\Cat$. Thus, if the $W$-weighted colimit of $\G$ exists, then it must just be given by the 1-colimit of $\G$, which is easily verified to be $\coprod_YG_y$. To show that this is in fact the $W$-weighted colimit of $\G$, we still have to check the relevant two-dimensional universal property. That is, we want to show that for each $G\in\Grp$, the bijection of sets $$\Hom_{\Grp}\left(\coprod_YG_y,G\right)\cong\Hom_{[{SX}^\mathrm{op},\Cat]}(W,\Hom_\Grp(\G(-),G))\cong\Hom_{[{SX},\Grp]}(\G,\Delta G)$$ is actually an isomorphism of categories (natural in $G$). We leave this as an exercise, since we will be dealing with a more general case in Proposition \ref{thm1}.

Observe that $\coprod_YG_y$ is the 1-coproduct but not the 2-coproduct of the $G_y$ in $\Grp$. The two-dimensional universal property above relies on the connectedness of ${SX}$.

\item Suppose ${X}$ is a single edge $e$ with $d_0(e)=u$, $d_1(e)=v$, which is again a tree. Note that the barycentric subdivision ${SX}$ is simply a span. Let $\G$ be the graph of groups which assigns groups $G_u, G_v, G_e$ to $u, v, e$ respectively. The fundamental group of $\G$ is the pushout $G_u\coprod_{G_e}G_v$, and so is the $W$-weighted colimit of $\G$.

\item Suppose ${X}$ is a single loop, i.e.\ a single vertex $v$ and a single edge $e$, which has a unique maximal subtree $T=\{v\}$. Let $\G$ be the graph of groups which assigns groups $G,H$ to $v,e$ respectively, and morphisms $\alpha,\beta\colon H\to G$. The fundamental group of $\G$ is the HNN-extension of $\alpha$ and $\beta$.

The barycentric subdivision ${SX}$ is the category $(e\rightrightarrows v)$ and weight $W\colon{SX}^\mathrm{op}\to\Cat$ sends $v$ to $*\in\Cat$, $e$ to $\mathbf{2}\in\Cat$, the top arrow to $*\mapsto0\in\mathbf{2}$ and the bottom arrow to $*\mapsto1\in\mathbf{2}$. The $W$-weighted colimit of $\G$ is, by definition, the 2-coinserter of $\alpha$ and $\beta$, which is precisely their HNN-extension.
\end{enumerate}
\end{exmp}

In the above, we had to check both the one-dimensional and two-dimensional universal properties of the relevant 2-colimit. If the 2-category $\Grp$ has all copowers by $\mathbf{2}$, then the two-dimensional universal property would be automatic (see \cite[page 6]{kelly2cat}). Unfortunately, this is not the case, as the following example shows.

\begin{exmp}
The 2-category $\Grp$ does not have all copowers by $\mathbf{2}$ (and so in particular is not 2-cocomplete). In fact, even the copower $\mathbf{2}*1$ does not exist. If it does, then for each $G\in\Grp$, we would have an isomorphism of categories $$\Hom_{\Grp}(\mathbf{2}*1,G)\cong\Hom_{\Cat}(\mathbf{2},\Hom_\Grp(1,G))\cong\Hom_\Cat(\mathbf{2},G),$$ natural in $G$. Looking at objects, this forces $\mathbf{2}*1=\Z$. Let $g,h\in G\cong\Hom_\Grp(\Z,G)$. Then, a morphism $g\to h$ in $\Hom_\Grp(\Z,G)$ corresponds precisely to an element $t\in G$ conjugating $g$ to $h$, but a morphism $g\to h$ in $\Hom_\Cat(\mathbf{2},G)$ corresponds to any element of $G$.
\end{exmp}

In view of Example \ref{imptex}, the following should not be surprising. We note that when ${X}$ is a tree (so that the weight $W$ is trivial), the one-dimensional aspect of the following proposition was already observed in \cite[Section 3.9]{sdecomp}.

\begin{prop}\label{thm1}
Let ${X}$ be a connected graph with maximal subtree $T$, ${SX}$ be its barycentric subdivision, and $W\colon{SX}^\mathrm{op}\to\Cat$ be the associated weight. Let $\G\colon{SX}\to\Grp$ be a graph of groups over ${X}$. Then, the fundamental group $\pi=\pi_1(\G,T)$ of $\G$ is precisely the $W$-weighted 2-colimit of $\G$.
\begin{proof}
By definition, we want to show that for each $G\in\Grp$, there is an isomorphism of categories
\begin{eqnarray}\label{eqn1}
\Hom_{\Grp}(\pi,G)\cong\Hom_{[{SX}^\mathrm{op},\Cat]}(W,\Hom_\Grp(\G(-),G)).
\end{eqnarray}
This will be clear once we describe explicitly what the objects and morphisms in the category $\mathbf{H}=\Hom_{[{SX}^\mathrm{op},\Cat]}(W,\Hom_\Grp(\G(-),G))$ are.

An object $\alpha\in\mathbf{H}$ is precisely a function that assigns each $v\in VX$ a group homomorphism $\alpha_v\colon \G v\to G$, together with an element $a_e\in G$ for each $e\in EX- ET$, such that for every edge $e\in EX$ with $d_0(e)=u$ and $d_1(e)=v$, the following holds.
\begin{itemize}
\item If $e\in ET$ is a tree edge, then the following diagram commutes
\[
\begin{tikzcd}
\G e \arrow[r, "\G(e^0)"] \arrow[d, "\G(e^1)"'] & \G u \arrow[d, "\alpha_u"] \\
\G v \arrow[r, "\alpha_v"']                     & G                         
\end{tikzcd}
\]
\item If $e\notin ET$ is not a tree edge, then the following is a 2-morphism in $\Grp$
\[
\begin{tikzcd}
\G e \arrow[r, "\G(e^0)"] \arrow[d, "\G(e^1)"'] & \G u \arrow[d, "\alpha_u"] \arrow[ld, "a_e" description, Rightarrow] \\
\G v \arrow[r, "\alpha_v"']                     & G                                                       
\end{tikzcd}
\]
In other words, we have $a_e[(\alpha_u\circ\G(e^0))(g)]a_e^{-1}=(\alpha_v\circ\G(e^1))(g)$ for all $g\in \G e$.

(Alternatively, we can combine the two cases by assigning an element $a_e\in G$ for each $e\in EX$, but requiring that $a_e=1$ whenever $e\in ET$.)
\end{itemize}
It is plain from this description and the presentation of $\pi$ that (\ref{eqn1}) holds on objects.

Morphisms are trickier. Suppose we have two objects $\alpha,\beta\in\mathbf{H}$ (with associated elements $a_e,b_e\in G$ respectively). For convenience, let's write $\alpha_e^i=\alpha_{d_i(e)}\circ \G(e^i)$, so that $a_e\alpha_e^0a_e^{-1}=\alpha_e^1$, and similarly for $\beta$. By definition, a morphism $m\colon\alpha\to\beta$ in $\mathbf{H}$ is a modification between natural transformations. Unwinding the definition carefully, this means that:
\begin{itemize}
\item For each $v\in VX$, we have an element $m_v\in G$ such that $m_v\alpha_vm_v^{-1}=\beta_v$.
\item For each $e\in EX$, we have an element $m_e\in G$ such that $m_e\alpha_e^0m_e^{-1}=\beta_e^0$ and an element $m_e'\in G$ such that $m_e'\alpha_e^1m_e'^{-1}=\beta_e^1$. These elements satisfy $m_e'=b_em_ea_e^{-1}$ (so it is enough to specify only $m_e$).
\item For each $e\in EX$, we have, from the modification equations, that $m_e=m_{d_0(e)}$ and $m_e'=m_{d_1(e)}$. For the convenience of the reader, we explicitly display below the modification equation at a morphism $e^0\colon e\to d_0(e)=u\in{SX}$. In particular, if $e\in ET$ is a tree edge, then $m_e=m_{d_0(e)}=m_{d_1(e)}$.
\[
\begin{tikzcd}
Wu=* \arrow[rr, "\alpha_u", bend left] \arrow[rr, "\beta_u"', bend right] \arrow[dd, "*\mapsto0"'] & m_u\bigg\Downarrow  & {\Hom_\Grp(\G u,G)} \arrow[dd, "(-)\circ\G(e^0)"] \\
                                                                                                   &                     &                                                   \\
We \arrow[rr, "\beta_e^0"']                                                                          &                     & {\Hom_\Grp(\G e,G)}                               \\
                                                                                                   & =                   &                                                   \\
Wu=* \arrow[rr, "\alpha_u"] \arrow[dd, "*\mapsto0"']                                               &                     & {\Hom_\Grp(\G u,G)} \arrow[dd, "(-)\circ\G(e^0)"] \\
                                                                                                   &                     &                                                   \\
We \arrow[rr, "\alpha_e^0", bend left] \arrow[rr, "\beta_e^0"', bend right]                            & m_e\bigg\Downarrow & {\Hom_\Grp(\G e,G)}                              
\end{tikzcd}
\]
\end{itemize}
We conclude that $m_x=m_y$ for all $x,y\in{X}$, so a morphism $m\colon\alpha\to\beta$ is equivalently an element $m\in G$ such that $m\alpha_vm^{-1}=\beta_v$ for all $v\in VX$ and $ma_em^{-1}=b_e$ for all $e\in EX$. It is plain from this description that (\ref{eqn1}) is an isomorphism of categories.

We leave it as an exercise to check the naturality of (\ref{eqn1}) in $G$.
\end{proof}
\end{prop}

In \cite{bsgrpd}, Bass--Serre theory for groupoids is studied. In fact, it can be shown that fundamental groupoids of graphs of groupoids can also be described as weighted 2-colimits, in exactly the same way as Proposition \ref{thm1}. Since this is not the focus of the paper, we will be brief here and leave the proof of the next proposition as an exercise.

Let $\Grpd$ be the category of groupoids (i.e.\ small 1-categories where all morphisms are invertible), which again inherits a 2-categorical structure from $\Cat$. Let $X$ be a connected graph with barycentric subdivision ${SX}$. To match the setting of \cite{bsgrpd}, we only consider graphs of groupoids $\G\colon{SX}\to\Grpd$ where $\G(e^i)\colon\G(e)\to\G(d_i(e))$ is an isomorphism on objects for each $e\in EX$ and $i=0,1$ (see \cite[Hypothesis 1]{bsgrpd}).

\begin{prop}\label{thm2}
Let ${X}$ be a connected graph with maximal subtree $T$, ${SX}$ be its barycentric subdivision, and $W\colon{SX}^\mathrm{op}\to\Cat$ be the associated weight. Let $\G\colon{SX}\to\Grpd$ be a graph of groupoids over ${X}$ satisfying the assumption above. Then, the fundamental groupoid of $\G$ (as defined in \cite[Definition 6.1]{bsgrpd}) is precisely the $W$-weighted 2-colimit of $\G$.
\end{prop}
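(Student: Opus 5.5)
Following the proof of Proposition \ref{thm1}, I will describe explicitly the category $\mathbf{H}=\Hom_{[SX^\mathrm{op},\Cat]}(W,\Hom_\Grpd(\G(-),\mathcal{K}))$ for an arbitrary groupoid $\mathcal{K}$. I will then compare it with $\Hom_\Grpd(\pi_1(\G),\mathcal{K})$, where $\pi_1(\G)$ is the fundamental groupoid of \cite[Definition 6.1]{bsgrpd}, unwound via its presentation by generators and relations. The strategy is to show that the two descriptions agree on objects and on morphisms, and that the identification is natural in $\mathcal{K}$.

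\textbf{Objects.} An object of $\mathbf{H}$ consists of a functor $\alpha_v\colon\G v\to\mathcal{K}$ for each vertex $v$ (the edge components are determined by the vertex ones). For each non-tree edge $e$, it also includes a natural transformation $a_e\colon\alpha_{d_0(e)}\circ\G(e^0)\Rightarrow\alpha_{d_1(e)}\circ\G(e^1)$, whose components are invertible since $\mathcal{K}$ is a groupoid. For tree edges the square commutes strictly. Hypothesis 1 says each $\G(e^i)$ is bijective on objects, so $a_e$ amounts to a family of arrows $a_{e,x}$ of $\mathcal{K}$, one for each object $x$ of $\G e$, satisfying naturality with respect to the arrows of $\G e$. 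This is exactly the data of a family of ``transport arrows'' $t_{e,x}$ subject to the conjugation relations. I will check against \cite[Definition 6.1]{bsgrpd} that this is precisely what a functor out of $\pi_1(\G)$ is: the vertex groupoids are glued along tree edges, and one generator is added for each non-tree edge and each object over it. This gives the bijection on objects.

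\textbf{Morphisms.} A modification $m\colon\alpha\to\beta$ consists of natural transformations $m_v\colon\alpha_v\Rightarrow\beta_v$ and $m_e\colon\alpha^0_e\Rightarrow\beta^0_e$. The modification equations force $m_e=m_{d_0(e)}\G(e^0)$, and also $b_e\circ m_{d_0(e)}\G(e^0)=m_{d_1(e)}\G(e^1)\circ a_e$. These are exactly the conditions for the $m_v$ to assemble into a natural transformation between the corresponding functors $\pi_1(\G)\to\mathcal{K}$. Naturality must hold at the generating arrows, namely the vertex-groupoid arrows and the transport arrows, and those are the equations just listed.

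\textbf{Main obstacle.} The step I expect to be hardest is this morphism comparison. Unlike the group case, the $m_v$ are no longer forced to be one global element. Instead, they should glue into a single family indexed by the objects of $\pi_1(\G)$. This requires understanding the object set of the fundamental groupoid in \cite{bsgrpd}, and in particular checking that Hypothesis 1 makes objects over tree-adjacent vertices correspond bijectively. Without that, the comparison could fail to be bijective. Naturality in $\mathcal{K}$ is routine.
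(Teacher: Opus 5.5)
Your proposal is correct and follows the route the paper intends. The paper gives no separate argument, saying only that the result holds ``in exactly the same way as Proposition \ref{thm1}'' and leaving it as an exercise. Your unwinding of $\mathbf{H}$ is precisely the groupoid analogue of that proof, with the single global conjugating element replaced by components indexed by the objects of the fundamental groupoid:
\begin{itemize}
\item on objects, vertex functors glued strictly along tree edges, plus natural transformations $a_e$ for non-tree edges;
\item on morphisms, families of natural transformations $m_v$ subject to the tree-edge and transport-arrow equations.
\end{itemize}

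One clarification on the step you flag as the main obstacle. Hypothesis 1 is not needed to compute the weighted 2-colimit itself. Its object set is always $\coprod_v\mathrm{Ob}(\G v)$ modulo the identifications $\G(e^0)(z)\sim\G(e^1)(z)$ for tree edges $e$. The hypothesis is needed only to match this set, via the bijections along the tree, with the object set used in the cited definition. So the comparison does not risk failing to be bijective; the remaining work is just that matching check.
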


Propositions \ref{thm1} and \ref{thm2} suggest that one way to extend the main body of the paper to include transport elements is to use a categorical framework that combines both internal and enriched category theories. The theory of enriched indexed categories (\cite{shulmanenriched}) comes to mind, but deciding whether it's a good framework for our purpose is beyond the scope of the current paper.

\section{Relations to Categories of Graphs of Groups in \cite{gogpullbacks}}
\label{appendixB}

In \cite{gogpullbacks}, the authors define various categories of graphs of groups and study pullbacks in them. Let us explain exactly how our categorical definition of graphs of groups relates to one of their categories, namely the one given by \cite[Definition 1.5]{gogpullbacks}.

Recall from Definition \ref{defbary} (and the paragraph below it) that for an (ordinary) graph $X\in\Graph$, we define $[SX,\Grp]=\Grp(\Graph_{/X})$ to be the category of graphs of groups over $X$. To allow the base graph $X$ to vary, we would like to take the Grothendieck construction of $$\Graph^{\mathrm{op}}\to\Cat,\,\,\,\, X\mapsto[SX,\Grp],$$ where we note that $S$ defines a functor $\Graph\to\Cat$. One can check that this does not give the same notion as \cite{gogpullbacks}: the problem is that a morphism in $[SX,\Grp]$ is, by definition, a natural transformation, which does not give the \emph{twisting elements} used in \cite[Definition 1.5]{gogpullbacks}.

Instead, as in Appendix \ref{appendixA}, we will view $\Grp$ as a full subcategory of $\Cat$, so that 2-morphisms in $\Grp$ exactly describe conjugation. Let $[SX,\Grp]_{\mathrm{Ps}}$ denote the category of functors and pseudonatural transformations from $SX$ (viewed as a 2-category with only identity 2-morphisms) to $\Grp$. In our specific setting, because $SX$ has no non-identity 2-morphisms and no composable non-identity 1-morphisms, pseudonatural transformations have the following simple description. Given two functors $\G,\Hc\colon SX\to\Grp$, a pseudonatural transformation $\eta\colon\G\Rightarrow \Hc$ consists of:
\begin{itemize}
\item a function that assigns each $x\in X$ a group homomorphism $\eta_x\colon\G(x)\to\Hc(x)$, and
\item for $i=0,1$, a function that assigns each edge $e\in EX$ an element $s_e^i\in\Hc(d_i(e))$ satisfying $$(s_e^i)(\Hc(e^i)\circ \eta_e)(s_e^i)^{-1}=\eta_{d_i(e)}\circ\G(e^i),$$ where we recall that $e^i\colon e\to d_i(e)$ is the morphism in $SX$ that represents the source (if $i=0$) or target (if $i=1$) map at $e$.
\end{itemize}

Consider now the Grothendieck construction $$\mathbf{G}=\int[S-,\Grp]_{\mathrm{Ps}}.$$ By definition, it has objects pairs $(\G,X)$, where $X\in\Graph$ and $\G\colon SX\to\Grp$, i.e.\ $\G$ is a graph of groups over $X$. Given two such objects $(\G,X)$ and $(\Hc,Y)$, a morphism $\mu\colon(\G,X)\to(\Hc,Y)$ in $\mathbf{G}$ is precisely a map $b\colon X\to Y$ in $\Graph$ and a map $\G\to[Sb,\Grp](\Hc)=\Hc\circ Sb$ in $[SX,\Grp]_{\mathrm{Ps}}$. But this means exactly that we have:
\begin{itemize}
\item a function that assigns each $x\in X$ a group homomorphism $\mu_x\colon\G(x)\to\Hc(b(x))$, and
\item for $i=0,1$, a function that assigns each edge $e\in EX$ an element $s_e^i\in\Hc(bd_i(e))$ satisfying $$(s_e^i)(\Hc(b(e^i))\circ \mu_e)(s_e^i)^{-1}=\mu_{d_i(e)}\circ\G(e^i).$$
\end{itemize}

This precisely recovers \cite[Definition 1.5]{gogpullbacks}, as soon as we only consider graphs of groups $\G$ which send morphisms in $SX$ to \emph{injective} group homomorphisms, and realise that we are working with directed graphs, while \cite{gogpullbacks} works with graphs in the sense of Serre \cite{trees}. For the convenience of the reader, we note that our symbols $$b\colon X\to Y,e\in EX,d_0(e),d_1(e),\G(e^0),\G(e^1),s_e^0,s_e^1$$ that appeared earlier correspond to the respective symbols $$[.]\colon \textsf{B}\to \textsf{A},f\in E(\textsf{B}),v,v',\alpha_f,\omega_f,f_\alpha,f_\omega$$ in \cite[Definition 1.5]{gogpullbacks}.

\bibliographystyle{unsrt}

\end{document}